\newcommand{\noun}[1]{\textsc{#1}}
\numberwithin{equation}{section}
\numberwithin{figure}{section}
\numberwithin{table}{section}
\theoremstyle{plain}
\newtheorem{thm}{\protect\theoremname}[section]
  \theoremstyle{plain}
  \newtheorem{prop}[thm]{\protect\propositionname}
  \theoremstyle{remark}
  \newtheorem{rem}[thm]{\protect\remarkname}
  \theoremstyle{plain}
  \newtheorem{conjecture}[thm]{\protect\conjecturename}
  \theoremstyle{definition}
  \newtheorem{defn}[thm]{\protect\definitionname}
  \theoremstyle{definition}
  \newtheorem{example}[thm]{\protect\examplename}
\newenvironment{lyxlist}[1]
{\begin{list}{}
{\settowidth{\labelwidth}{#1}
 \setlength{\leftmargin}{\labelwidth}
 \addtolength{\leftmargin}{\labelsep}
 }}
{\end{list}}
  \theoremstyle{plain}
  \newtheorem{lem}[thm]{\protect\lemmaname}
 \newcommand*\circled[1]{\tikz[baseline=(char.base)]{\node[shape=circle,draw,inner sep=1pt] (char) {#1};}}
\def\<{\langle}
\def\>{\rangle}
\def\Li{\text{\rm Li}}
\def\QQ{\mathbb{Q}}
\def\RR{\mathbb{R}}
\def\CC{\mathbb{C}}
\def\ZZ{\mathbb{Z}}
\def\PP{\mathbb{P}}
\def\VV{\mathbb{V}}
\def\cS{\mathcal{S}}
\def\cV{\mathcal{V}}
\def\vf{\varphi}
\def\fm{\mathfrak{m}}
\def\ad{\text{ad}}
\def\Ad{\text{Ad}}
\def\fg{\mathfrak{g}}
\def\k12{\mathcal{K}_{\lambda_1,\lambda_2}}
\def\tk12{\tilde{\mathcal{K}}_{\lambda_1,\lambda_2}}
\def\ck12{\check{\mathcal{K}}_{\lambda_1,\lambda_2}}
\def\cX{\mathcal{X}}
\def\cF{\mathcal{F}}
\def\hcF{\cF^{\bullet}}
\def\b{\bullet}
\theoremstyle{definition}
\theoremstyle{definition}
  \providecommand{\conjecturename}{Conjecture}
  \providecommand{\definitionname}{Definition}
  \providecommand{\examplename}{Example}
  \providecommand{\lemmaname}{Lemma}
  \providecommand{\propositionname}{Proposition}
  \providecommand{\remarkname}{Remark}
\providecommand{\theoremname}{Theorem}
\begin{document}

\title[Arithmetic of degenerating principal VHS]{Arithmetic of degenerating principal VHS: \\ examples arising from mirror symmetry and middle convolution}

\author[G. da Silva Jr., M. Kerr, and G. Pearlstein]{Genival da Silva Jr., Matt Kerr, and Gregory Pearlstein}

\subjclass[2000]{14D07, 14M17, 17B45, 20G99, 32M10, 32G20}
\begin{abstract}
We collect evidence in support of a conjecture of Griffiths, Green
and Kerr \cite{GGK} on the arithmetic of extension classes of limiting
mixed Hodge structures arising from semistable degenerations over
a number field. After briefly summarizing how a result of Iritani
\cite{Ir} implies this conjecture for a collection of hypergeometric
Calabi-Yau threefold examples studied by Doran and Morgan \cite{DM},
the authors investigate a sequence of (non-hypergeometric) examples
in dimensions $1\leq d\leq6$ arising from Katz's theory of the middle
convolution \cite{Ka,DR}. A crucial role is played by the Mumford-Tate
group (which is $G_{2}$) of the family of 6-folds, and the theory
of boundary components of Mumford-Tate domains \cite{KP}.
\end{abstract}
\maketitle

\section{Introduction}

Absolutely irreducible $\QQ$-local systems can underlie at most one
polarized variation of Hodge structure, which suggests that the asymptotics
of such variations at a puncture should exhibit interesting arithmetic.
For variations of motivic origin, one envisions arithmetic constraints
on the extension classes (periods) of the limiting mixed Hodge structures,
cf. Conjecture \ref{conj ggk} below. The Mumford-Tate group $G$
of the VHS imposes its own algebraic constraints upon these extensions,
which can simplify the form of the conjecture. Given a $G$-rigid
local system, the middle convolutions of Katz \cite{Ka} give some
hope for constructing a family of motives with the local system (and
VHS) appearing in its cohomology.

This paper was motivated by the desire to check the conjectural property
for some local systems on the thrice-punctured sphere underlying motivic
VHS of type $(1,1,1,1)$ and $(1,1,1,1,1,1,1)$, at a point of maximal
unipotent monodromy. Variations with extremal Hodge numbers $1$ have
been called ``Calabi-Yau'' for some time; when all the Hodge numbers
are $1$, terminology from representation theory (``principal $sl_{2}$'')
suggests calling them ``principal''. Robles's recent classification
\cite{Ro} of the corresponding Hodge representations rules out all
exceptional groups except for $G_{2}$ as Mumford-Tate group, which
itself can only occur in the weight/level 6 case. Moreover, the effects
of $G_{2}$ on the limiting MHS are well-understood via boundary components
\cite{KP}, a story which is briefly reviewed in $\S3$.

Our first main point is that a recent result of Iritani in mirror
symmetry \cite{Ir} allows one to compute the limiting extension classes
for many of the $(1,1,1,1)$ examples classified by Doran and Morgan
\cite{DM}. If $X^{\circ}$ is a complete intersection CY 3-fold in
a weighted projective space with rank 4 even cohomology, we use 
Iritani's $\hat{\Gamma}$ integral structure on its quantum cohomology to give
a straightforward computation of the (large complex structure) limiting
period matrix $\Omega_{lim}$ of the VHS arising from $H^{3}$ of
the mirror family (cf. \eqref{e:4*}). In particular, the nontorsion
extension class $\varepsilon\in\CC/\QQ(3)\cong Ext_{MHS}^{1}(\QQ(-3),\QQ(0))$
is given by $\left(\int_{X^{\circ}}c_{3}(X^{\circ})\right)\zeta(3)$.

The second point is that using middle convolution, one may construct
interesting motivic variations not treatable by mirror symmetry, but
where the limiting periods may be computed directly by a residue method.
This approach, which we apply in $\S5$ to two examples (including
one with $G_{2}$ monodromy) from the work of Dettweiler and Reiter
\cite{DR}, shows promise more generally for cyclic covers branched
along a union of hyperplanes. Moreover, it gives a clearer picture
of the origin of the zeta values in limiting extension classes, which
in $\S$4 is buried in a deep mirror theorem. The main idea is that
Katz's method builds a sequence of families $X_{d}(t)$ ($d\geq1$)
of the form $w^{2}=f(x_{1},\ldots,x_{d},t)$, whose $\log^{d}t$ period
in a neighborhood of the point $t=0$ of maximal unipotency is given
by the iterative formula
\[
\pi_{2j}(t):=i\int_{t}^{1}\frac{\pi_{2j-1}(x)\, dx}{\sqrt{x(1-x)(1-\frac{t}{x})}},\;\;\;\;\pi_{2j+1}(t):=i\int_{t}^{1}\frac{\pi_{2j}(x)\, dx}{x\sqrt{1-\frac{t}{x}}},
\]
where $\pi_{1}(t):=2\int_{1}^{\frac{1}{t}}\frac{dx}{\sqrt{(1-tx)(1-x)x}}$.
The top weight graded piece of $H^{d}(X_{d}(t))$ contains a principal
variation (at least, for $d\leq7$), and all the data of $\Omega_{lim}$
for this VHS is contained in the asymptotics of $\pi_{d}(t)$.

For $d=3$, we are able to completely determine these asymptotics
(Theorem \ref{thm:d=00003D3}), and hence the extension class $\varepsilon=-48\zeta(3)\in\CC/\QQ(3)$.
Our luck did not hold out for $d=6$ ($\S5.5$), where we were only
able to compute ``part'' of the integral; however, this piece does
contain a term of the form $-72\zeta(5)$ as expected (``towards''
the extension class in $Ext_{MHS}^{1}(\QQ(-5),\QQ(0))$), and Conjecture
\ref{conj:d=00003D6} represents an educated guess at the entire thing.
Moreover, our partial computation contains a ravishing number-theoretic
tidbit \inputencoding{latin1}{(Lemma \ref{lem:g2=00003D})}\inputencoding{latin9},
which we call the \emph{$G_{2}$-identity}\inputencoding{latin1}{:}

\inputencoding{latin9}\tiny
\[
\sum{}^{'}\frac{(\frac{1}{2})_{k_{1}}(\frac{1}{2})_{k_{2}}(\frac{1}{2})_{a}(\frac{1}{2})_{b}}{(b-a+\frac{1}{2})(b+k_{1}+\frac{1}{2})(a+k_{1})(a+k_{2})}+\sum{}^{'}\frac{(\frac{1}{2})_{k_{1}}(\frac{1}{2})_{k_{2}}(\frac{1}{2})_{a}(\frac{1}{2})_{b}}{(b-a+\frac{1}{2})(b+k_{1}+\frac{1}{2})(b+k_{2}+\frac{1}{2})(a+k_{1})}
\]
\[
-\sum{}^{'}\frac{(\frac{1}{2})_{a}(\frac{1}{2})_{b}}{(b+a+\frac{1}{2})(b+\frac{1}{2})a^{2}}\;\;\;\;\;\;\;\;=\;\;\;\;\;\;\;\;\frac{64\pi}{3}\log^{3}2+\frac{2\pi^{3}}{3}\log2-12\pi\zeta(3).
\]
\normalsize (Here $\sum{}^{'}$ means to sum over tuples of non-negative
integers for which the summand is defined.) This identity is \emph{a
consequence of the vanishing of the }\inputencoding{latin1}{\emph{``third
extension class'' in the maximal unipotent LMHS of a principal variation
}}\inputencoding{latin9}\emph{with $G_{2}$ Mumford-Tate group}, and
(bizarrely) it is needed to finish off the $d=3$ computation. Our
computations also make heavy use of some identities relating hypergeometric
special values and Riemann zeta values; how to derive these is outlined
in the Appendix.

%This paper has strong ties to several others in this volume. In particular,
%our use of mirror symmetry in $\S4$ complements that in the paper
%of S. Usui \cite{Us}, while Patrikis \cite{Pa} uses middle convolutions
%to construct generalized Kuga-Satake lifts of motives. For some arithmetic
%observations (of a very different nature) on principal variations,
%one may consult \cite{Ke}.

In writing this paper we encountered several questions which merit
further investigation. For example, is there a direct construction
of ``limiting data'', from a quasi-unipotent $G$-rigid local system
$\mathbb{V}$ over $\PP^{1}\backslash\text{pts.}$, that does not
pass through variations of Hodge structure? Here the motivation is
that there should exist a unique VHS underlain by $\mathbb{V}$, cf.
Remark \ref{simpson conj}. In the middle convolution case, is there
a better approach to computing the LMHS than direct computation of
asymptotics of $\pi_{d}(t)$, perhaps one extending the computation
of the LMHS Hodge numbers in \cite{DS}? Finally, can one use mirror
symmetry to compute any limiting extension classes in $Ext_{MHS}^{1}(\QQ(-5),\QQ(0))$?
Here the problem is typically that some extension class in $Ext_{MHS}^{1}(\QQ(-3),\QQ(0))$
is nonzero, and then one of the previous form is not well-defined;
but even in this case it would still be of interest to compute $\Omega_{lim}$
(say, for CY 5- or 6-folds).

\subsubsection*{Acknowledgments:}

MK thanks C. Doran, P. Griffiths and Y. Kovchegov for interesting discussions
regarding this paper. We also wish to acknowledge partial support
from CAPES (da Silva Jr.), NSF Grants DMS-1068974 and DMS-1361147
(Kerr), and NSF Grants DMS-1002625 and DMS-1361120 (Pearlstein).

\section{Local systems and limiting mixed Hodge structures}

Let $\cS$ be a complex algebraic manifold, and fix a base point $s_{0}\in\cS$.
Given an absolutely irreducible $\QQ$-local system $\VV$ over $\cS$,
put $V:=\VV_{s_{0}}$ and define the monodromy group
\[
\Gamma:=\text{image}\left(\rho:\pi_{1}(\cS,s_{0})\to GL(V)\right).
\]
The geometric monodromy group $\Pi$ is the identity connected component
of its $\QQ$-Zariski closure.

Now suppose there exists a polarized variation of Hodge structure
(PVHS) $\cV=\left(\VV,Q,\hcF\right)$ over $\VV$ of weight $n$.%
\footnote{$\mathcal{V}$ will also sometimes denote, by abuse of notation, the
locally free sheaf $\mathbb{V}\otimes\mathcal{O}_{\mathcal{S}}$ (or
the corresponding vector bundle).%
}
\begin{prop}
\label{prop uniqueness}Up to Tate twist, $\cV$ is unique.\end{prop}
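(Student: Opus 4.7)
The plan is to apply the theorem of the fixed part (Deligne) to the internal Hom of two hypothetical PVHSs on $\mathbb{V}$, leveraging absolute irreducibility via Schur's lemma to reduce to a one-dimensional sub-Hodge structure.

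Suppose $\mathcal{V}_{1}=(\mathbb{V},Q_{1},\mathcal{F}_{1}^{\bullet})$ and $\mathcal{V}_{2}=(\mathbb{V},Q_{2},\mathcal{F}_{2}^{\bullet})$ are two PVHSs of weights $n_{1}$ and $n_{2}$ on the same local system $\mathbb{V}$. First I would form the internal Hom $\mathcal{H}:=\mathrm{Hom}(\mathcal{V}_{1},\mathcal{V}_{2})=\mathcal{V}_{1}^{\vee}\otimes\mathcal{V}_{2}$, which is a PVHS over $\mathcal{S}$ of weight $n_{2}-n_{1}$, with underlying local system $\mathrm{End}(\mathbb{V})$ and Hodge filtration $F^{p}\mathcal{H}=\{f:f(\mathcal{F}_{1}^{a})\subset\mathcal{F}_{2}^{a+p}\text{ for all }a\}$. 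The identity $\mathrm{id}\in\mathrm{End}(V)$ is a monodromy-invariant (hence flat) global section of $\mathcal{H}$.

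Next I would invoke two inputs. By absolute irreducibility and Schur's lemma, $\mathrm{End}_{\Gamma}(V)=\mathbb{Q}$, so the space of flat sections of $\mathcal{H}$ is exactly $\mathbb{Q}\cdot\mathrm{id}$. By Deligne's theorem of the fixed part, this one-dimensional $\mathbb{Q}$-subspace carries a sub-Hodge structure of $\mathcal{H}_{s_{0}}$, necessarily of weight $n_{2}-n_{1}$. A rational line supporting a Hodge structure of weight $n_{2}-n_{1}$ must be purely of type $(p,p)$ with $n_{2}-n_{1}=2p$. In particular $n_{2}-n_{1}$ is even, and $\mathrm{id}\in F^{p}\mathcal{H}_{s_{0}}$, which reads $\mathcal{F}_{1}^{a}\subset\mathcal{F}_{2}^{a+p}$ for every $a$ (at $s_{0}$, hence on all of $\mathcal{S}$ by flatness of the inclusion).

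Running the same argument with the roles of $\mathcal{V}_{1}$ and $\mathcal{V}_{2}$ exchanged gives the reverse inclusion $\mathcal{F}_{2}^{a+p}\subset\mathcal{F}_{1}^{a}$, and hence equality. Thus $\mathcal{F}_{2}^{\bullet}=\mathcal{F}_{1}^{\bullet-p}$ and the weights match $(n_{2}=n_{1}+2p)$, which is exactly to say $\mathcal{V}_{2}\cong\mathcal{V}_{1}(-p)$ as filtered local systems. The same Schur argument applied to $\mathrm{Hom}(\mathcal{V}_{1},\mathcal{V}_{1}^{\vee})$ shows the polarization is unique up to a rational scalar, so the identification respects polarizations up to normalization and we obtain uniqueness up to Tate twist.

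The main obstacle is really the conceptual one of correctly packaging $(\mathcal{V}_{1},\mathcal{V}_{2})$ into a PVHS on whose global flat sections one can apply the fixed-part theorem; once that packaging is in place, absolute irreducibility reduces everything to the trivial observation that a rational Hodge line of weight $w$ is of type $(w/2,w/2)$. No delicate estimates or case analysis are required.
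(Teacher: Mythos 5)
Your proof is correct and takes essentially the same route as the paper's: form $\mathcal{V}_1^\vee\otimes\mathcal{V}_2$, use absolute irreducibility and Schur's lemma to identify the flat sections with $\mathbb{Q}\cdot\mathrm{id}$, and apply the theorem of the fixed part to conclude that $\mathrm{id}$ spans a rank-one constant sub-VHS, hence is of type $(p,p)$. You merely spell out more explicitly than the paper does how the $(p,p)$ condition on $\mathrm{id}$ translates into the shift $\mathcal{F}_2^{\bullet}=\mathcal{F}_1^{\bullet-p}$ of Hodge filtrations, i.e.\ the Tate twist.
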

\begin{proof}
Given $\cV$, $\cV'$ PVHS over $\VV$, $\cV^{\vee}\otimes\cV'$ is
a PVHS over $\VV^{\vee}\otimes\VV$, and by Schur's lemma $\left(\VV^{\vee}\otimes\VV\right)^{\Gamma}=\QQ\langle\text{id}_{\VV}\rangle$.
By the Theorem of the Fixed Part \cite{Sc}, $\QQ\langle\text{id}_{\VV}\rangle$
therefore underlies a constant sub-VHS of $\cV^{\vee}\otimes\cV$'
, rank $1$ hence of type $(p,p)$.
\end{proof}
As Griffiths puts it, Riemann would be proud: this sort of result
goes back to his characterization of the hypergeometric functions
by their local monodromy about $0,1,\infty$. Note that the existence
of $\mathcal{V}$ implies that $\mathbb{V}$ is semisimple with quasi-unipotent
monodromies.

Assume now that $\cS$ has a smooth compactification $\bar{\cS}$
with a holomorphic disk embedding
\[
\begin{array}{ccccc}
\cS & \subset & \bar{\cS} & \ni & x\\
\cup &  & \cup &  & \uparrow\\
\Delta^{*} & \overset{\jmath}{\subset} & \Delta & \ni & 0
\end{array}
\]
and let $s$ be a choice of local coordinate on $\Delta$. Restricting
$\cV$ to $\Delta^{*}$, we assume the local monodromy $T$ is unipotent
and set
\[
N:=\log(T)=\sum_{k\geq1}\frac{(-1)^{k-1}}{k}(T-I)^{k}\in End(V,Q).
\]
There exists a unique increasing filtration $W_{\bullet}=W(N)_{\bullet}$
on $V$ such that ($\forall k$)
\[
\left\{ \begin{array}{c}
N(W_{k}V)\subset W_{k}V\\
N^{\ell}:Gr_{n+k}^{W}V\overset{\cong}{\to}Gr_{n-k}^{W}V
\end{array}\right..
\]
Moreover, the ``untwisted'' local system $\tilde{\VV}:=\jmath_{*}(e^{-\ell(s)N}\VV)$
(where $\ell(s):=\frac{\log(s)}{2\pi i}$) extends to $\Delta$. By
the Nilpotent Orbit Theorem \cite{Sc}, the Hodge sheaves $\cF^{p}\subset\cV$
extend to locally free subsheaves $\cF_{e}^{p}\subset\cV_{e}:=\tilde{\VV}\otimes\mathcal{O}_{\Delta}$
on $\Delta$, and the $SL_{2}$-orbit Theorem {[}op. cit.{]} implies:
\begin{prop}
$(\psi_{s}\cV)_{x}:=\left(\tilde{\VV},W_{\b},\cF_{e}^{\b}\right)|_{x}$
is a mixed Hodge structure polarized by N, called the limiting mixed
Hodge structure (LMHS).
\end{prop}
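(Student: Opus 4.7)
The plan is to invoke Schmid's $SL_{2}$-orbit theorem to produce the splitting data needed to verify Deligne's axioms for a mixed Hodge structure polarized by $N$. By the Nilpotent Orbit Theorem already quoted, the sheaves $\cF_{e}^{p}$ extend as locally free subsheaves, so we have a well-defined decreasing filtration $F_{\infty}^{\b}:=\cF_{e}^{\b}|_{x}$ on $V_{\CC}$, and the nilpotent orbit $z\mapsto\exp(zN)F_{\infty}^{\b}$ is a PVHS on some upper half-plane $\{\text{Im}(z)>c\}$.

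The first step is to produce, via the $SL_{2}$-orbit theorem, an $\mathfrak{sl}_{2}$-triple $(N,H,N^{+})$ of $(Q,\RR)$-endomorphisms of $V$ and an $\RR$-split pure Hodge structure $\hat{F}^{\b}$ of weight $n$ on $V$ such that the weight filtration $W(N)_{\b}$ coincides with the $n$-shift of the filtration by $H$-eigenvalues, and such that $(\hat{F},W(N))$ arises as the limit of an $SL_{2}$-equivariant approximation of the nilpotent orbit. The joint bigrading $V_{\CC}=\bigoplus_{p,q}I^{p,q}$ associated to this data then provides Deligne's axioms for a mixed Hodge structure: $F_{\infty}^{p}=\bigoplus_{p'\geq p}I^{p',q}$, $W_{k}=\bigoplus_{p+q\leq k}I^{p,q}$, and the conjugation relation $\overline{I^{p,q}}\equiv I^{q,p}\pmod{\bigoplus_{r<q,\,s<p}I^{r,s}}$.

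Next, I would check that $N$ carries $I^{p,q}$ into $I^{p-1,q-1}$, which follows because $N$ lies in $\fg^{-1,-1}$ with respect to the bigrading; equivalently, $N$ is a morphism of MHS $(V,W,F_{\infty})\to(V,W[-2],F_{\infty}[-1])$. The isomorphism $N^{\ell}:Gr_{n+\ell}^{W}V\overset{\cong}{\to}Gr_{n-\ell}^{W}V$ is built into the definition of $W(N)$. Finally, the polarization property---that $Q(\cdot,N^{\ell}\cdot)$ polarizes the primitive subquotient $P_{n+\ell}:=\ker(N^{\ell+1})\subset Gr_{n+\ell}^{W}V$ as a pure Hodge structure of weight $n+\ell$---follows from the Hodge--Riemann bilinear relations at any point of the approximating $SL_{2}$-orbit together with the $SL_{2}$-equivariance of $Q$.

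The main obstacle is the $SL_{2}$-orbit construction itself: showing that the period map is uniformly asymptotic, in the natural distance on the classifying space $\mathcal{D}$, to an equivariant $SL_{2}$-orbit, and that a horizontal $SL_{2}$-triple with $N$ as lowering operator exists compatibly with $Q$. This is the technical core of Schmid's work, requiring the delicate Lie-algebra-valued ODE analysis of op.\ cit. Once that ingredient is in place, verification of the Hodge and polarization axioms for $(V,W_{\b},F_{\infty}^{\b})$ is essentially algebraic.
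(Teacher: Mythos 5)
Your proposal is correct and follows the same route as the paper, which offers no argument of its own beyond attributing the statement directly to Schmid's Nilpotent Orbit and $SL_{2}$-orbit Theorems. You have simply unpacked the standard deduction (Deligne bigrading from the $\RR$-split approximation, $N$ as a $(-1,-1)$-morphism, Hodge--Riemann relations on primitive parts), with the technical core deferred to Schmid exactly as the paper does.
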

Writing $F_{lim}^{\b}:=\cF_{e}^{\b}|_{x}$, we shall denote by $\cV_{nilp}:=\left(\VV,Q,e^{-\ell(s)N}F_{lim}^{\b}\right)$
the associated nilpotent orbit, which is again a VHS over (a possibly smaller)
punctured disk $\Delta^*$.

We conclude that to an absolutely irreducible local system on $\cS$,
point $x\in\bar{\cS}$, and local coordinate $s\in\mathcal{O}(\Delta)$,
Schmid's results associate a MHS. The extension classes inherent in
the latter are thereby already in this sense \emph{invariants of the
local system}.
\begin{rem}
\label{simpson conj}Of course, this begs the question as to which
local systems underlie a PVHS! It is expected (cf. \cite{DR2}) that
quasi-unipotency and $G$-rigidity%
\footnote{$\mathbb{V}$ is \emph{$G$-rigid} if the $G$-orbit of the associated
monodromy representation $\rho$ is open in $Hom(\pi_{1}(\cS),G)$.%
} (for some $G\leq GL(V)$ containing $\Gamma$) suffice for $\mathbb{V}$
to underlie a \emph{motivic} PVHS, that is, one arising from a family
of varieties over $\cS$. For $\cS=\PP^{1}\backslash\text{pts.}$
and $G=GL(V)$, this is proved by Katz \cite{Ka} using his middle
convolution algorithm, which we touch on in $\S5$.
\end{rem}
Recall next that a motive over a field $k\subseteq\mathbb C$ is, roughly 
speaking, a bounded complex of smooth quasi-projective varieties with arbitrary
maps between them, all defined over $k$. Through a ``realization''
process similar to hypercohomology, one can take the various cohomology
groups of such a complex, which yields in particular (from de Rham
and Betti) a MHS we will say to be \emph{$k$-motivated}.

Now assume that $\VV=R^{k}f_{*}\QQ_{\cX}$ arises from the following
situation, called a \emph{semi-stable degeneration (SSD) over $k$}:
\[
\begin{array}{cccccc}
\cX & \hookrightarrow & \bar{\cX} & \hookleftarrow & X_{0} & =\bar{f}^{-1}(x)\\
\downarrow f &  & \downarrow\bar{f} &  & \downarrow\\
\cS & \hookrightarrow & \bar{\cS} & \ni & x\\
 &  & \downarrow g\\
 &  & \PP^{1}
\end{array}
\]
where $\bar{f}$ is proper and flat, $f$ is smooth, $g(x)=0$ with
$\text{ord}_{x}(g)=1$, and $X_{0}:=(g\circ\bar{f})^{-1}(0)=\cup Y_{i}$
is a reduced SNCD. (That is, the intersections $Y_{I}:=\cap_{i\in I}Y_{i}$
are smooth and transversal.) Moreover, the entire diagram, and all
inclusions $Y_{I}\hookrightarrow Y_{I\backslash i}$, are defined
over $k$.
\begin{conjecture}
\cite{GGK} \label{conj ggk}Let $s$ be the restriction of $g$ to
a disk about $x$. Then $(\psi_{s}\cV)_{x}$ is $k$-motivated. In
particular, the extension class $\varepsilon\in\CC/\ZZ(m)$ of any
Tate subquotient $0\to\ZZ(0)\to\mathbb{E}\to\ZZ(-m)\to0$ belongs
to the image of motivic cohomology $H_{M}^{1}(Spec(k),\mathbb{Z}(m))$
under the generalized Abel-Jacobi mapping.
\end{conjecture}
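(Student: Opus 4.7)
The plan is two-step: first, exhibit $(\psi_{s}\cV)_{x}$ as the Betti--de Rham realization of an object defined over $k$ in a category of mixed motives; second, translate this $k$-motivic origin into the stated property of the extension class $\varepsilon$ under the Beilinson regulator. For step one, I would start from Steenbrink's cohomological mixed Hodge complex computing $(\psi_{s}\cV)_{x}$. Its building blocks are the pure Hodge structures on $H^{*}(Y_{I})$ (for $Y_{I}:=\cap_{i\in I}Y_{i}$) with appropriate Tate twists, and its differentials are assembled from Gysin and restriction maps. Because the SSD is defined over $k$, every $Y_{I}$ is smooth projective over $k$ and all differentials are morphisms of Chow motives over $k$. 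Hence the Steenbrink complex lifts term-by-term to a complex in a triangulated category $DM(k)$ of motives, and the convergent spectral sequence computing $(\psi_{s}\cV)_{x}$ is the Hodge realization of a $k$-motivic spectral sequence. In particular each graded piece $\mathrm{Gr}^{W}_{p}(\psi_{s}\cV)_{x}$ is motivated in the classical pure sense.

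For step two, suppose a Tate subquotient $0 \to \ZZ(0) \to \mathbb{E} \to \ZZ(-m) \to 0$ is realized by a motive $M\in DM(k)$. Then its extension class in $\mathrm{Ext}^{1}_{MHS}(\ZZ(-m),\ZZ(0)) \cong \CC/\ZZ(m)$ is automatically the image of a class in $\mathrm{Ext}^{1}_{DM(k)}(\ZZ(-m),\ZZ(0)) \cong H^{1}_{M}(\mathrm{Spec}(k),\ZZ(m))$ under the Beilinson regulator, and the ``generalized Abel--Jacobi mapping'' in the conjecture is precisely this regulator (identified through Deligne cohomology $H^{1}_{\mathcal{D}}(\mathrm{Spec}(k),\ZZ(m))$). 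Naturality of the regulator with respect to morphisms in $DM(k)$ then yields the stated conclusion on $\varepsilon$.

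The main obstacle is the passage from ``Steenbrink's complex lifts term-by-term to Chow motives'' to ``the LMHS is itself the Hodge realization of a single mixed motive over $k$, compatibly with the regulator.'' One must either (a) develop a functorial motivic nearby-cycles formalism on $DM(k)$ intertwining Steenbrink's nearby cycle construction on the Hodge side with a regulator to absolute Hodge (or Deligne) cohomology; or (b) for each individual extension of interest, exhibit an explicit higher Chow cycle or algebraic $K$-theory class on $\mathrm{Spec}(k)$ (or on one of the $Y_{I}$) whose Abel--Jacobi image recovers $\varepsilon$. Option (a) demands foundational machinery that is not currently available in the generality required --- in particular, a motivic nearby-cycles functor compatible with all of Schmid's constructions; option (b) is in fact the evidence-gathering strategy pursued in this paper, where the zeta values $\zeta(3),\zeta(5),\ldots$ appearing in the concrete examples sit visibly in the image of the regulator on $K_{2m-1}(k)\otimes\QQ$, but producing such cycles systematically for arbitrary SSDs over $k$ lies beyond current techniques, which is why the conjecture remains open and motivates the case-by-case computations of $\S4$ and $\S5$.
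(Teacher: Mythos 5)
This statement is labeled a \emph{Conjecture} in the paper (attributed to [GGK]), and the paper offers no proof of it; its role in the paper is to motivate the case-by-case verifications of \S4 and \S5. Your write-up is, appropriately, not a proof either: you sketch the natural strategy (lift Steenbrink's nearby-cycle complex term-by-term to motives over $k$, then invoke naturality of the regulator) and then correctly identify the gap --- the absence of a motivic nearby-cycles formalism proven to realize Schmid's LMHS --- that prevents it from closing. This is consistent with the paper's own Remark following the conjecture, which says that Conjecture \ref{conj ggk} ``would probably follow from the assertion (itself still conjectural) that the Hodge realization of Levine's motive is the LMHS''; your option (a) is essentially that assertion, and your option (b) is exactly what the paper does in its examples (computing $\varepsilon$ as a rational multiple of $\zeta(3)$ or $\zeta(5)$, visibly in the image of the regulator on $K_{2m-1}(\QQ)\otimes\QQ$). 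One small caution: even granting the term-by-term lift of the Steenbrink complex, concluding that each $Gr^W_p(\psi_s\cV)_x$ is motivated does not by itself control the \emph{extension data} between graded pieces, which is precisely where $\varepsilon$ lives; so the reduction to pure pieces in your step one buys less than it might appear, and the genuine content of the conjecture is concentrated in the step you flag as the obstacle. In short: your assessment agrees with the paper's, and no proof exists in the paper to compare against.
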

If $k=\QQ$, this says that 
\[
\varepsilon\equiv\left\{ \begin{array}{cc}
\log(a),\, a\in\QQ^{*} & (m=1)\\
q\zeta(m),\, q\in\QQ & (m>1)
\end{array}\right.;
\]
note that $\zeta(m)\in\CC/\ZZ(m)$ is torsion if $m$ is even. In
what follows we shall be working with rational coefficients, and hence
interested only in odd $m$.
\begin{rem}
Several constructions of limiting motives have recently appeared in
the literature, for instance \cite{Le}. Conjecture \ref{conj ggk}
would probably follow from the assertion (itself still conjectural)
that the Hodge realization of Levine's motive is the LMHS.
\end{rem}
The existence (up to torsion) of a Tate subquotient is an algebraic
requirement; it is useful at this point to consider what algebraic
conditions might produce it, which brings us to the next section.

\section{Mumford-Tate domains and boundary components}

Let $V$ be a (finite-dimensional) $\QQ$-vector space, $Q:V\times V\to\QQ$
a $(-1)^{n}$-symmetric nondegenerate bilinear form. Writing $S^{1}<\CC^{*}$
for the unit circle, consider a homomorphism
\[
\vf:S^{1}\to SL(V_{\RR})
\]
with $\vf(-1)=(-1)^{n}\cdot\text{id}_{V}$.
\begin{defn}
\label{Def PHS}(i) $(V,Q,\vf)$ is a \emph{polarized Hodge structure}
\emph{(PHS)}%
\footnote{It is implicit here that the PHS is ``pure of weight $n$'', even
though the definition only records the parity of $n$.%
} if
\[
\left\{ \begin{array}{ccc}
\text{(I)} & \vf(S^{1})\subset Aut(V,Q) & \text{and}\\
\text{(II)} & Q(v,\vf(i)\bar{v})>0 & (\forall v\in V_{\CC}\backslash\{0\}).
\end{array}\right.
\]

(ii) The \emph{Mumford-Tate group} \emph{(MTG)} $G_{\vf}$ is the
$\QQ$-algebraic group closure of $\vf(S^{1})$.
\end{defn}
The $\vf(S^{1})$-fixed points in the tensor spaces $T^{k,l}V:=V^{\otimes k}\otimes(V^{\vee})^{\otimes l}$
are the \emph{Hodge tensors} ${Hg}^{k,l}V$.
\begin{prop}
\cite{De} $G_{\vf}$ is the subgroup of $GL(V)$ fixing $\oplus Hg^{k,l}V$
pointwise.
\end{prop}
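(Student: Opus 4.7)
The plan is to prove a double inclusion between $G_{\vf}$ and the pointwise stabilizer $H$ of $\oplus_{k,l} Hg^{k,l}V$ inside $GL(V)$.

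First I would verify $G_{\vf} \subseteq H$. By the very definition of $Hg^{k,l}V$, the subgroup $\vf(S^{1})$ fixes every Hodge tensor pointwise, so $\vf(S^{1}) \subseteq H$. The condition that a given element of $GL(V)$ fix a prescribed $\QQ$-rational tensor in some $T^{k,l}V$ is cut out by polynomial equations with $\QQ$-coefficients in the matrix entries, so $H$ (an intersection of such conditions) is a $\QQ$-algebraic subgroup of $GL(V)$. Since $G_{\vf}$ is by definition the smallest such group containing $\vf(S^{1})$, it lies in $H$.

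Next I would establish the nontrivial direction $H \subseteq G_{\vf}$ by invoking the standard tensor-stabilizer theorem (a refinement of Chevalley's theorem, cf.\ \cite{De}): any $\QQ$-algebraic subgroup $G \leq GL(V)$ is the simultaneous pointwise stabilizer of some finite collection $\tau_{1}, \ldots, \tau_{r}$ of tensors in $\oplus_{k,l} T^{k,l}V$. Applied to $G = G_{\vf}$, each $\tau_{i}$ is fixed by $G_{\vf}$ and hence \emph{a fortiori} by $\vf(S^{1})$, so each $\tau_{i}$ is itself a Hodge tensor. Since $H$ fixes every Hodge tensor, $H$ lies in the common stabilizer of the $\tau_{i}$, which is precisely $G_{\vf}$.

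The main obstacle is the tensor-stabilizer theorem itself. Chevalley's classical statement only realizes $G$ as the setwise stabilizer of a \emph{line} in a single tensor representation; the upgrade to pointwise stabilization of genuine tensors requires killing the resulting character of $G$ on that line, typically by tensoring with a dual or determinant representation so that the character becomes trivial. This step would need to be handled carefully, but it is standard in the theory of linear algebraic groups and I would cite it rather than re-derive it.
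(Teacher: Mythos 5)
The paper offers no proof of this proposition beyond the citation to Deligne, and your outline follows the standard argument from that source: the easy inclusion $G_{\vf}\subseteq H$ via $\QQ$-algebraicity of the pointwise stabilizer, and the reverse inclusion via realizing $G_{\vf}$ itself as the fixer of finitely many tensors. The first half is fine. But the key lemma you invoke for the second half is false as you state it: it is \emph{not} true that every $\QQ$-algebraic subgroup $G\leq GL(V)$ is the pointwise stabilizer of a finite collection of tensors in $\oplus_{k,l}T^{k,l}V$. A subgroup with this property is necessarily observable (its orbit through the tuple of tensors identifies $GL(V)/G$ with a locally closed subvariety of an affine space, hence quasi-affine), and for instance a Borel subgroup $B\leq GL_2$ fails this, since $GL_2/B\cong\PP^1$ is projective. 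Concretely, the character-killing trick you describe in your last paragraph --- replacing the Chevalley line $L\subset W$ by the tensor $L\otimes L^{\vee}\subset W\otimes W^{\vee}$ --- requires $L^{\vee}$ to be a $G$-stable line of $W^{\vee}$, i.e.\ requires a $G$-stable complement to $L$ in $W$; this is exactly where complete reducibility is used, and it genuinely fails for non-reductive $G$.

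The missing idea is therefore the reductivity of $G_{\vf}$, and this is where the polarization (condition (II) of Definition 3.1, which you never use) enters: the Weil operator $C=\vf(i)$ lies in $G_{\vf}(\RR)$ and the form $Q(\cdot,C\bar{\cdot})$ is a positive-definite Hermitian form with respect to which $G_{\vf}(\RR)$ is stable under adjoints, so $G_{\vf}$ is reductive. (This is Deligne's Proposition 3.1(c) combined with the reductivity of Mumford--Tate groups of \emph{polarized} Hodge structures.) With that hypothesis supplied, the tensor-stabilizer theorem applies to $G_{\vf}$ and the rest of your argument --- the $\tau_i$ are fixed by $\vf(S^1)$, hence are Hodge tensors, hence are fixed by $H$, so $H\subseteq G_{\vf}$ --- goes through. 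Without it, the proposition can actually fail, so this is not a detail one can wave away by citing a general fact about linear algebraic groups.
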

The Lie group of real points of the MTG acts by conjugation on $\vf$,
and the (connected) Mumford-Tate domain (MTD) associated to $\vf$
is the orbit (under the identity connected component)
\[
D:=G_{\vf}(\RR)^{+}.\vf\cong G_{\vf}(\RR)^{+}/\mathcal{H}_{\vf}.
\]
By taking $V_{'\vf}^{p,n-p}\subset V_{\CC}$ to be the $z^{p-q}$-eigenspace
for $'\vf(z)$ ($'\vf\in D$), $D$ may be viewed as (a connected
component of) the locus in some period domain on which a finite set
of tensors in $\oplus T^{k,l}V$ becomes Hodge. Moreover, using the
Hodge flags $F_{'\vf}^{\b}V_{\CC}=\oplus_{p\geq\b}V_{'\vf}^{p,n-p}$,
we may embed $D$ in a product of Grassmanians, where its Zariski
closure defines the compact dual
\[
\check{D}=G_{\vf}(\CC).F_{\vf}^{\b}\cong G_{\vf}(\CC)/P_{F_{\vf}^{\b}}.
\]

For a polarized variation $\cV$ as in $\S2$, the pointwise MTG is
equal to some $G<GL(V)$ on the complement of a countable union of
proper analytic subvarieties, and we call this the MTG of $\cV$.
Since $\Pi\trianglelefteq G^{der}$ \cite{An}, we obtain (after possibly
replacing $\cS$ by a finite cover) a period map 
\[
\Phi:\cS\to\Gamma\backslash D.
\]
For studying the possible LMHS, it is convenient to replace $G$ by
$G^{ad}$, $V$ by $\fg=Lie(G^{ad})$, $\vf$ by the composition $S^{1}\overset{\vf}{\to}G\overset{Ad}{\twoheadrightarrow}G^{ad}$,
$Q$ by $-B$ ($B=$ Killing form), and the weight $n$ by $0$; then
$D$ is unchanged and no information is lost \cite{KP}. Assume now
that this change has been made, and let $N\in\fg_{\QQ}\backslash\{0\}$
be a nilpotent element (acting on $V$ by $\ad$).
\begin{defn}
\cite{KP} The \emph{pre-boundary component}%
\footnote{It sometimes happens that $\tilde{B}(N)$ as defined is not connected;
in this case, one should replace it by a choice of connected component.%
} 
\[
\tilde{B}(N):=\left\{ F^{\b}\in\check{D}\left|\begin{array}{cc}
NF^{\b}\subset F^{\b-1},\\
\Ad(e^{\tau N})F^{\b}\in D & \text{for }\Im(\tau)\gg0
\end{array}\right.\right\} 
\]
associated to $N$ and $D$ classifies the possible LMHS $(F^{\b},W(N)_{\b})$
of period maps into any quotient $\Gamma\backslash D$. Let $\tilde{B}_{\RR}(N)\subset\tilde{B}(N)$
be the subset consisting of $\RR$-split LMHS, and $B(N):=e^{\CC N}\backslash\tilde{B}(N)$
the set of nilpotent orbits. The \emph{boundary component} associated
to $N$, $D$, and a choice of $\Gamma$ is then $\bar{B}(N):=\Gamma_{N}\backslash B(N)$,
where $\Gamma_{N}:=\text{Stab}(\CC N)\cap\Gamma$.
\end{defn}

\begin{rem}
One can think of the quotient by $e^{\CC N}$ as eliminating the dependence of the LMHS on the scaling of the local coordinate.  We will be interested below in computing the point in $\bar{B}(N)$ associated to LMHS of Hodge-Tate type for principal VHS $\mathcal{V}$, so that $Gr^{W(N)}_k V$ is $0$ for $k$ odd and $\QQ(-\frac{k}{2})$ or $0$ for $k$ even.  In this case, there is an easy first step: we may rescale (``canonically normalize'') the local coordinate to eliminate the adjacent extensions (of $\QQ(p)$ by $\QQ(p+1)$) in $V$.
\end{rem}
\noindent The terminology of ``boundary component'' comes from the appearance of $\bar{B}(N)$ in partial
compactifications of $\Gamma\backslash D$, assuming it is nonempty.
In this case, let $M\leq Aut(G,B)$ be the subgroup fixing all Hodge
tensors of all LMHS in $\tilde{B}(N)$. (For a given MHS on $V$,
the Hodge tensors are the elements of $\oplus_{p,k,l}Hom_{MHS}(\QQ(p),T^{k,l}V)$.)
\begin{prop}
\cite{DK} Let $Z_{G}(N)\leq G$ denote the centralizer of $N$. Then
$M\leq Z_{G}(N)$ and $Z_{G}(N)$ share the unipotent radical $U=\exp\{\text{im}(\ad N)\cap\ker(\ad N)\}$.
Writing $G_{N}$ for a choice of Levi subgroup of $M(=G_{N}U$), $M(\RR)$
resp. $G_{N}(\RR)\ltimes U(\CC)$ act transitively on $\tilde{B}_{\RR}(N)$
resp. $\tilde{B}(N)$. Assuming $\Gamma$ is neat, there exists an
iterated generalized intermediate Jacobian fibration
\[
\bar{B}(N)\twoheadrightarrow\cdots\twoheadrightarrow\bar{B}(N)_{(k)}\twoheadrightarrow\cdots\twoheadrightarrow\bar{B}(N)_{(1)}\twoheadrightarrow\bar{D}(N)=\Gamma_{N}\backslash D(N),
\]
where $D(N)$ is a MTD for $G_{N}$.
\end{prop}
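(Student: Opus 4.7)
The plan is to establish the three assertions in turn: first the identification of the unipotent radical, which is essentially representation-theoretic; then the transitivity of the two group actions, which is the main technical step; and finally the tower of intermediate Jacobian bundles, which follows formally from the structure of $U$ once transitivity is in hand.

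For the first assertion, I would observe that for any LMHS $(V,W(N)_\b,F^\b)\in\tilde{B}(N)$, the nilpotent $N$ is a morphism of MHS of type $(-1,-1)$, hence a Hodge tensor; since every $m\in M$ fixes all Hodge tensors of every LMHS in $\tilde{B}(N)$, it commutes with $N$, so $M\le Z_G(N)$.  By Jacobson--Morozov choose an $\mathfrak{sl}_2$-triple $(N^-,Y,N)\subset\fg_\QQ$; the $\ad Y$-eigendecomposition exhibits $Z_\fg(N)=\fg_N\oplus\mathfrak{u}$, where $\fg_N:=Z_\fg(N)\cap Z_\fg(Y)$ is reductive (highest-weight vectors in trivial $\mathfrak{sl}_2$-summands) and $\mathfrak{u}:=\mathrm{im}(\ad N)\cap\ker(\ad N)$ is the nilpotent radical (highest-weight vectors in non-trivial summands).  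Since $\mathfrak{u}\subseteq W_{-1}\fg$ with respect to $W(\ad N)$, any element of $U:=\exp(\mathfrak{u})$ shifts the Hodge filtration strictly downward in weight, hence preserves every Hodge flag of every LMHS in $\tilde{B}(N)$ and acts trivially on $\text{Gr}^{W(N)}$; this forces $U\le M$, so $U$ is the unipotent radical of $M$ as well.

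For transitivity, an $\RR$-split LMHS polarized by $N$ is equivalent to a $G_N(\RR)$-orbit of PHS on $\oplus_k\text{Gr}_k^{W(N)}V_\RR$ --- namely a point of $D(N)$ --- together with a $U(\RR)$-torsor worth of splittings of the weight filtration, yielding transitivity of $M(\RR)=G_N(\RR)\ltimes U(\RR)$ on $\tilde{B}_\RR(N)$.  For the complex case, Deligne's canonical bigrading $V_\CC=\oplus I^{p,q}$ of a MHS induces $\fg_\CC=\oplus\fg^{p,q}$ and a unique presentation of any $F^\b\in\tilde{B}(N)$ as $\exp(\xi)\cdot F^\b_{\text{split}}$ with $F^\b_{\text{split}}\in\tilde{B}_\RR(N)$ and $\xi$ lying in a suitable negative-degree subspace of $\fg_\CC$; $\mathfrak{sl}_2$-theory identifies $\exp(\xi)\in U(\CC)$, giving the orbit description $\tilde{B}(N)=(G_N(\RR)\ltimes U(\CC))\cdot F^\b_{\text{split}}$.

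Granting this, the projection $\tilde{B}(N)\twoheadrightarrow D(N)$ sending an LMHS to its graded PHS is $M$-equivariant with unipotent complex fibre; dividing by the $e^{\CC N}$-rescaling and then by $\Gamma_N$ yields $\bar{B}(N)\twoheadrightarrow\bar{D}(N)$.  The $\ad Y$-weight filtration $\mathfrak{u}\supseteq\mathfrak{u}_{(2)}\supseteq\mathfrak{u}_{(3)}\supseteq\cdots$ produces normal subgroups $U_{(k)}\trianglelefteq U$ with abelian successive quotients; the corresponding intermediate tower $\bar{B}(N)_{(k)}$ is a sequence of generalized Griffiths intermediate Jacobian fibrations encoding the weight-$(-k)$ extension data of the LMHS, with neatness of $\Gamma$ guaranteeing that the $\Gamma_N$-lattices cutting out these Jacobians are torsion-free.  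The main technical obstacle is the complex transitivity step: it rests on the compatibility between Deligne's bigrading and the $\mathfrak{sl}_2$-weight grading of $\mathfrak{u}$, which is ultimately a consequence of the $SL_2$-orbit theorem, and is what ensures that every LMHS in $\tilde{B}(N)$ --- not only those in $\tilde{B}_\RR(N)$ --- is reached from a single $\RR$-split base point by the action of $G_N(\RR)\ltimes U(\CC)$.
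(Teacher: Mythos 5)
First, a point of comparison that matters for this exercise: the paper does not prove this proposition at all --- it is imported verbatim from \cite{DK}, with the underlying theory developed in \cite{KP} --- so there is no internal argument to measure your sketch against. Judged on its own terms, your overall architecture is the right one and matches the cited sources in outline: $M\le Z_G(N)$ because $N$ is a Hodge $(-1,-1)$ tensor of every LMHS in $\tilde{B}(N)$; the Jacobson--Morozov decomposition $Z_{\fg}(N)=\fg_N\oplus\mathfrak{u}$ with $\fg_N=Z_{\fg}(N)\cap Z_{\fg}(Y)$ and $\mathfrak{u}=\text{im}(\ad N)\cap\ker(\ad N)$; complex transitivity via the Deligne bigrading and the $\RR$-split reduction from $SL_2$-orbit theory; and the tower $\bar{B}(N)_{(k)}$ from the weight filtration on $\mathfrak{u}$.

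There is, however, a genuine error in your argument that $U\le M$. You assert that since $\mathfrak{u}\subseteq W_{-1}\fg$, every element of $U=\exp(\mathfrak{u})$ ``preserves every Hodge flag of every LMHS in $\tilde{B}(N)$,'' and conclude $U\le M$ from this. That premise is false: $N$ itself lies in $\mathfrak{u}$ (it is in $\ker(\ad N)$ and, via $N=\tfrac{1}{2}\,\ad(N)(Y)$, in $\text{im}(\ad N)$), and $e^{N}=T$ is the monodromy, which fixes no $F^{\b}\in\tilde{B}(N)$ --- it translates along the nilpotent orbit. More generally, an element of $W_{-1}\fg$ has Deligne components of type $(p,q)$ with $p+q\le-1$ and possibly $p<0$, and such components strictly \emph{lower} the Hodge filtration rather than preserve it. Indeed, if $U$ fixed every flag in $\tilde{B}(N)$ it could not supply the transitivity on the fibres of $\tilde{B}(N)\twoheadrightarrow D(N)$ that the very next clause of the proposition requires of it. (Even for an element that did preserve $W$ and $F$ and act trivially on $Gr^{W}$, one would only conclude that a rational class $v\in W_{2p}\cap F^{p}$ is fixed modulo $W_{2p-1}$, not fixed.) The correct route, as in \cite{KP}, goes through the structure theory of the Mumford--Tate group of a \emph{mixed}, generally non-split, Hodge structure: one computes $Lie(M)$ from the Deligne bigradings of the LMHS in $\tilde{B}(N)$ and identifies its nilpotent part with $\mathfrak{u}$, using that fixing all rational Hodge tensors of a non-split MHS is strictly weaker than stabilizing its Hodge filtration. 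Relatedly, the one-sentence identification of the splitting element $\xi$ with an element of $\mathfrak{u}_{\CC}$ (one must check that the $\delta$ of the $\RR$-split reduction commutes with $N$) is precisely the technical heart of the complex transitivity claim and cannot be waved through.
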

We will need a source of PHS $\vf$ with interesting MTG. Let $G$
be a $\QQ$-simple adjoint group of rank $r$ such that $G_{\RR}$
has a compact Cartan subgroup. Let $\theta\in Aut(G_{\RR})$ be a
Cartan involution, $K<G_{\RR}$ the corresponding maximal compact
subgroup, and $T_{\RR}<K$ a Cartan subgroup (of dimension $r$).
Writing $\mathcal{R}$ for the root lattice, $\Delta$ {[}resp. $\Delta_{c}$,
$\Delta_{n}${]} for the roots {[}resp. compact, noncompact roots{]},
we have the Cartan decomposition 
\[
\fg=\mathfrak{t}\oplus\left(\oplus_{\alpha\in\Delta_{c}}\fg_{\alpha}\right)\oplus\left(\oplus_{\beta\in\Delta_{n}}\fg_{\beta}\right).
\]

\begin{prop}
\cite{GGK2} Given a homomorphism $\pi:\mathcal{R}\to2\ZZ$ with $\pi(\Delta_{c})\in4\ZZ$,
$\pi(\Delta_{n})\subset4\ZZ+2$, there exists a unique weight $0$
PHS $(\fg,-B,\vf)$ with $\frac{d\vf}{dz}(1)|_{\mathcal{R}}=\pi$.
Moreover, provided $T_{\RR}$ (hence also $\theta)$ is sufficiently
general, the MTG $G_{\vf}=G$. The Hodge numbers of the Hodge structures
on $\fg$ parametrized by $D=G(\RR)^{+}.\vf$ are then 
\[
h^{j,-j}=\left\{ \begin{array}{cc}
|\pi^{-1}(2j)|, & j\neq0\\
|\pi^{-1}(0)|+r, & j=0
\end{array}\right..
\]

\end{prop}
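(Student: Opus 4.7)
The plan is to construct $\vf$ explicitly from $\pi$, verify the two PHS axioms by matching $\pi$ against the Cartan decomposition, and then handle Mumford--Tate genericity and the Hodge-number formula separately. For the construction, any Lie group homomorphism $\vf\colon S^{1}\to T_{\RR}$ is determined by its infinitesimal generator $H\in\mathfrak{t}_{\RR}$, and since $G$ is $\QQ$-simple the roots span $\mathfrak{t}_{\CC}^{*}$. Thus the linear system $\alpha(H)=\pi(\alpha)$ (for $\alpha\in\Delta$) has a unique solution $H_{\pi}$, and the resulting one-parameter subgroup descends to a genuine circle homomorphism $\vf\colon S^{1}\to T_{\RR}\subset G_{\RR}$ precisely because $\pi(\alpha)\in 2\ZZ$: this forces $\Ad(\vf(-1))$ to act trivially on every root space (and on $\mathfrak{t}$), hence trivially on $\fg$, so $\vf(-1)=1$ in the adjoint group $G$.

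Next, axiom (I) of Definition \ref{Def PHS} holds since $T_{\RR}\subset G_{\RR}\subset\text{Aut}(\fg,-B)$. For axiom (II), I would observe that $\Ad(\vf(i))$ acts on $\fg_{\alpha}$ by $i^{\pi(\alpha)}$, which by the parity hypothesis equals $+1$ for $\alpha\in\Delta_{c}$ and $-1$ for $\beta\in\Delta_{n}$. Combined with the trivial action on $\mathfrak{t}$, this identifies $\Ad(\vf(i))$ with the given Cartan involution $\theta$. The positivity $-B(X,\vf(i)\bar{X})>0$ then reduces to the classical fact that $-B$ is positive definite on the compact real form $\mathfrak{k}\oplus i\mathfrak{p}$, after noting that $X\mapsto \vf(i)\bar X$ carries $\fg_{\RR}=\mathfrak{k}\oplus\mathfrak{p}$ isomorphically onto $\mathfrak{k}\oplus i\mathfrak{p}$.

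For the MTG assertion, $G_{\vf}$ is a $\QQ$-algebraic subgroup of $G$ containing the $\QQ$-Zariski closure of the one-parameter subgroup generated by $H_{\pi}$; by Deligne's Proposition above and a countable-union-of-thin-sets argument on the space of Cartans, a sufficiently general choice of $T_{\RR}$ forces $G_{\vf}=G$. The Hodge number formula is then read off directly: $\fg^{p,-p}$ is the $z^{2p}$-eigenspace of $\Ad(\vf(z))$, so it collects the $|\pi^{-1}(2p)|$ root spaces with $\pi(\alpha)=2p$, together with all of $\mathfrak{t}_{\CC}$ (dimension $r$) precisely when $p=0$.

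The main obstacle is the MTG genericity: one must show that, for each proper $\QQ$-algebraic subgroup $H\subsetneq G$ containing a maximal torus, the locus of compact Cartans $T_{\RR}$ whose rational structure lies in $H$ is a proper $\QQ$-algebraic condition on the space of Cartans, so that a ``sufficiently general'' $T_{\RR}$ avoids all such $H$. The other steps are straightforward manipulations with the Cartan decomposition and the Killing form.
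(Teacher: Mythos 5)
The paper gives no proof of this proposition --- it is quoted verbatim from \cite{GGK2} --- so your attempt can only be measured against the standard argument there. Your construction of $\vf$ and verification of the two PHS axioms is correct and is essentially that argument: the roots span $\mathfrak{t}_{\CC}^{*}$, so $\pi$ determines a unique generator $H_{\pi}$; integrality of $\pi$ on $\mathcal{R}$ (the character lattice of $T$ in the adjoint group) makes the one-parameter subgroup close up into a circle with $\vf(-1)=\mathrm{id}$; the parity condition identifies $\Ad\vf(i)$ with $\theta$; and positivity of $-B(\,\cdot\,,\theta\,\overline{\cdot}\,)$ is the classical definiteness of the Killing form on the compact real form. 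The Hodge-number count from the $z^{2j}$-eigenspace decomposition is likewise correct (with the understanding that $\pi^{-1}(2j)$ means roots, not lattice elements, mapping to $2j$).

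The genuine gap is exactly where you flag it: the clause ``provided $T_{\RR}$ is sufficiently general, $G_{\vf}=G$'' is asserted but not proved, and it is the only nontrivial part of the statement. Your reduction is also slightly misstated: $T_{\RR}$ need not be defined over $\QQ$, so the relevant condition is on proper $\QQ$-algebraic subgroups $H<G$ containing the circle $\vf(S^{1})$, not ones containing a maximal torus. To close the argument one needs two things. First, there are only countably many $\QQ$-algebraic subgroups of $G$ (each is cut out by finitely many polynomials with rational coefficients), so ``sufficiently general'' can mean avoiding a countable union of loci in the conjugation orbit of Cartans. Second --- and this is the step you do not address --- each such locus is \emph{proper}: if some proper $\QQ$-subgroup $H$ contained $g\vf(S^{1})g^{-1}$ for every $g\in G(\RR)^{+}$, then $H$ would contain the subgroup generated by all these conjugates, which is normal in $G(\RR)^{+}$ and nontrivial, hence Zariski-dense in the $\QQ$-simple adjoint group $G$, a contradiction; one must then still check that each locus is a proper real-analytic (indeed algebraic) subvariety of the orbit, e.g.\ by expressing membership in $H$ as the vanishing of finitely many polynomial conditions on $g$. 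Without this, the statement $G_{\vf}=G$ --- on which the $Sp_{4}$ and $G_{2}$ examples of $\S 3$ rely --- remains unestablished.
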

In some cases, these constructions ``lift to a standard representation''.
Here are two examples where this occurs, together with choices of
$N$ (with ``maximally unipotent'' $T=e^{N}$) which produce nonempty
boundary components, and the pictures of $(p,q)$ types for the resulting
LMHS.
\begin{example}
($\fg=\mathfrak{sp}_{4}$) With $\pi$ as shown below, $D$ (of dimension
$4$) parametrizes weight $3$ PHS (of rank $4$) on the standard
representation $V$ with Hodge numbers $(1,1,1,1)$: \[\includegraphics[scale=0.5]{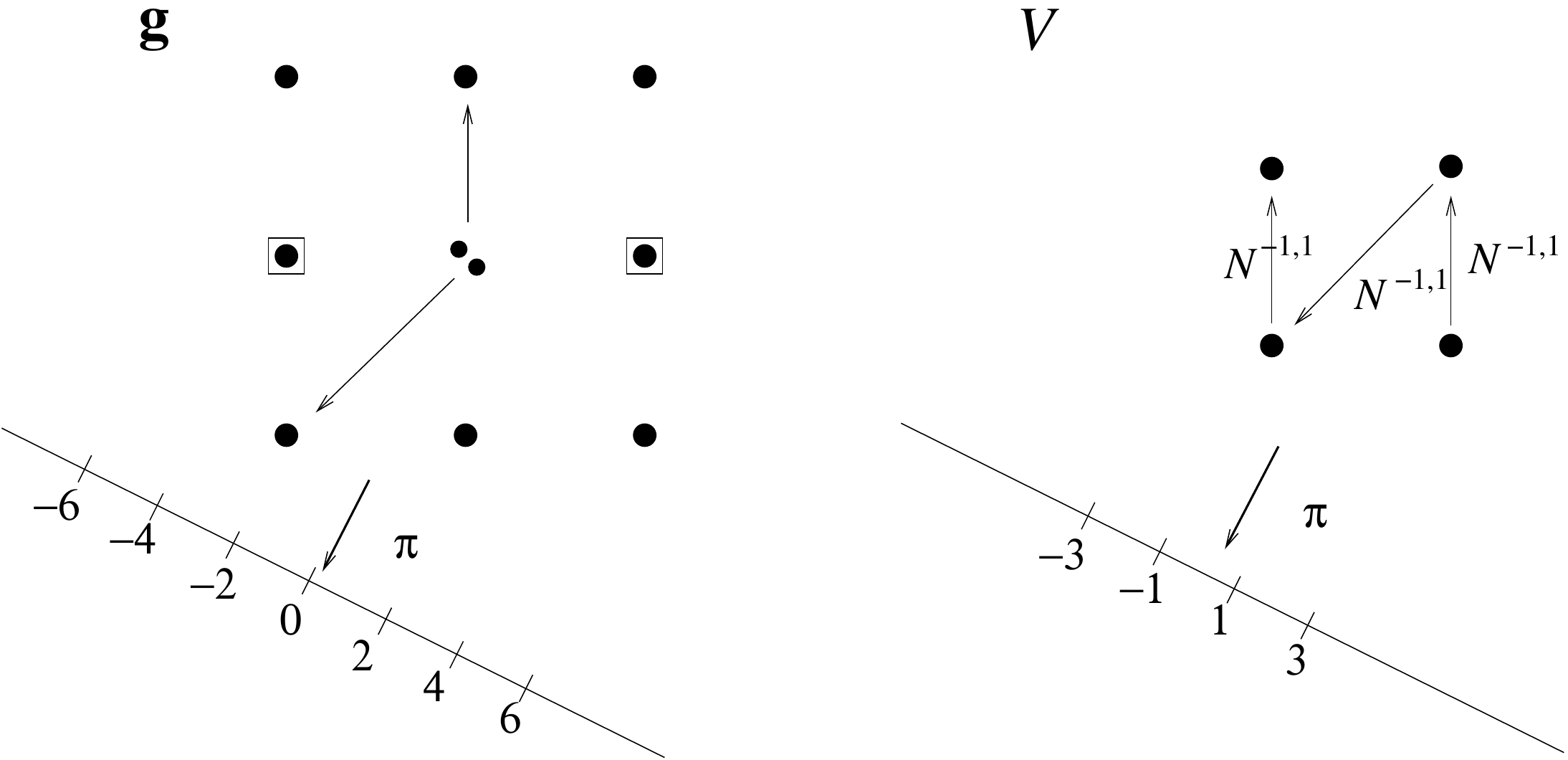}\](The
large dots on the left-hand side are roots, with the boxed ones in
$\Delta_{c}$; the small dots indicate the Cartan subalgebra $\mathfrak{t}$.
On the right, the dots are weights of $V$.) Moreover, there exist
$\vf\in D$ and $N\in\fg_{\QQ}$ such that $F_{\vf}^{\b}$ belongs
to $\tilde{B}(N)$, and the $\fg_{\vf}^{-1,1}$-component $N^{-1,1}$
is a linear combination of root vectors as shown on the left-hand
side (the arrows point to the corresponding roots) and operates on
$V$ as described by the arrows on the right-hand side. The LMHS $(F_{\vf}^{\b},W(N)_{\b})$
on $\fg$ and the induced LMHS on $V$ take the following form, where
the arrows describe the action of $N$:\[\includegraphics[scale=0.5]{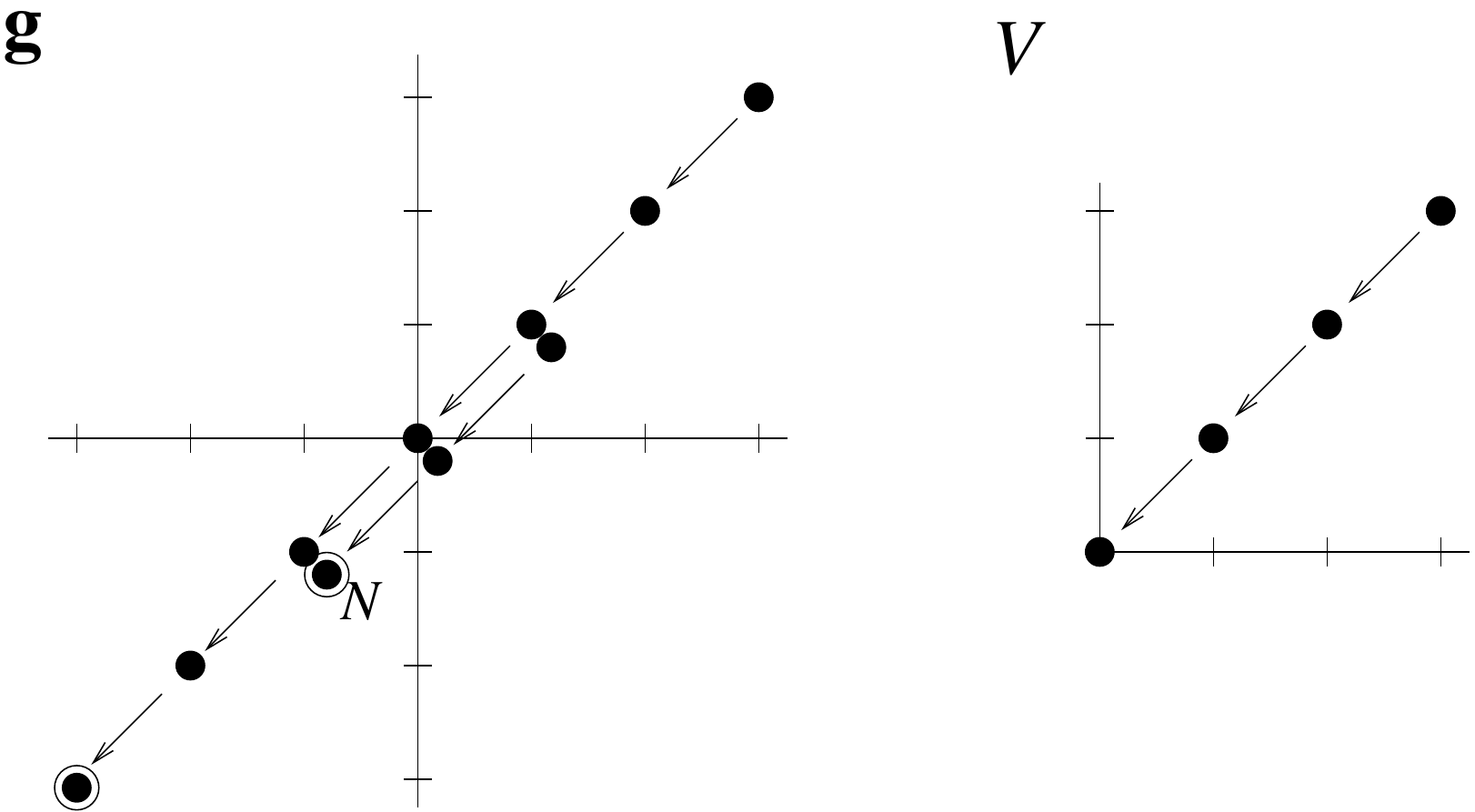}\]From
the picture one sees that $\fm_{\QQ}/\QQ N$ ($\fm=Lie(M)$ corresponding
to the circled types) is pure of rank one and type $(-3,-3)$; according
to \cite{KP} it follows that $\bar{B}(N)\cong Ext_{MHS}^{1}(\ZZ(-3),\ZZ(0))\cong\CC/\ZZ(3)$.
(Dividing out by $(2\pi i)^{3}$, this is just $\CC/\ZZ$.)
\end{example}
Turning to an exceptional group, we have
\begin{example}
($\fg=\fg_{2}$) Here $D$ is of dimension $5$, parametrizing weight
$6$ PHS on the $7$-dimensional irreducible representation $V$ of $G_{2}$ with Hodge
numbers $(1,1,1,1,1,1,1)$.\[\includegraphics[scale=0.5]{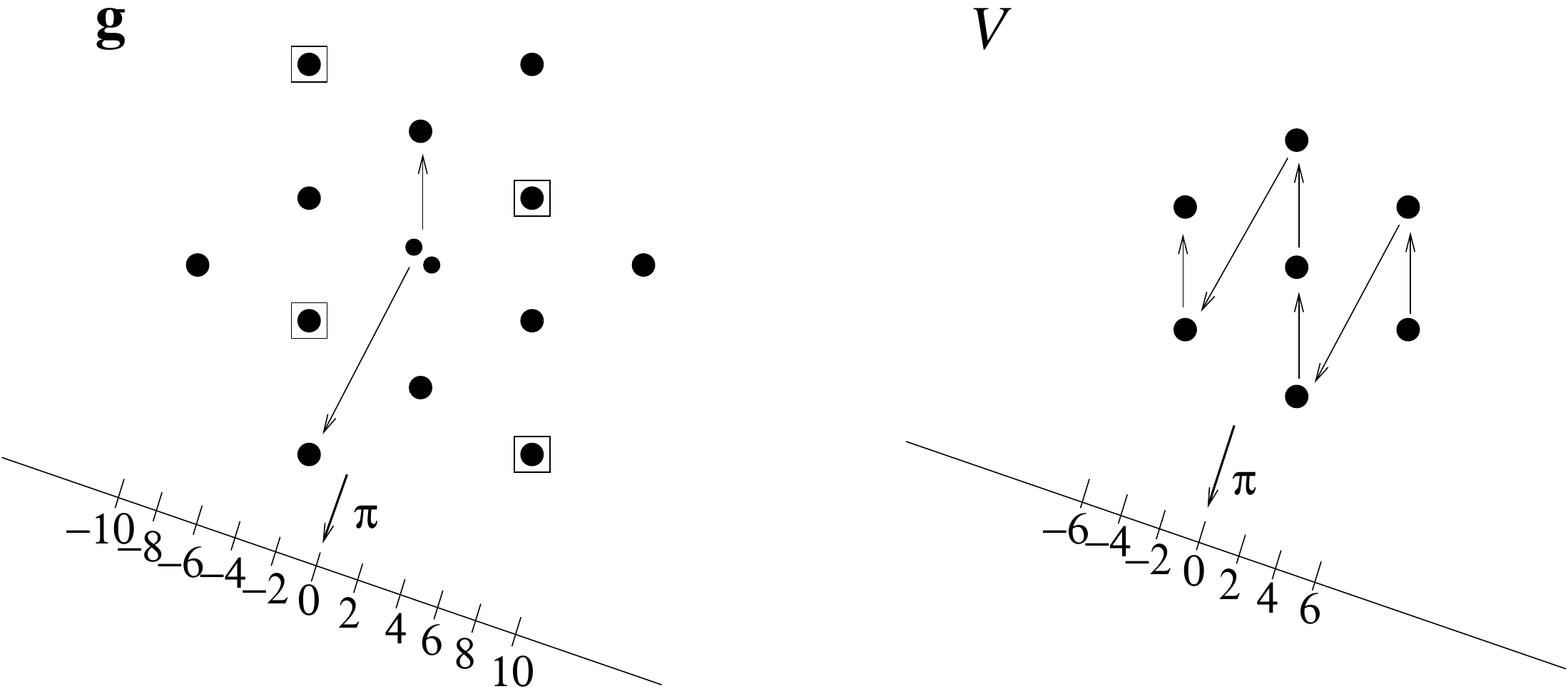}\]The
figures have the same meaning as in the $\mathfrak{sp}_{4}$ example.
For the LMHS one has:\[\includegraphics[scale=0.4]{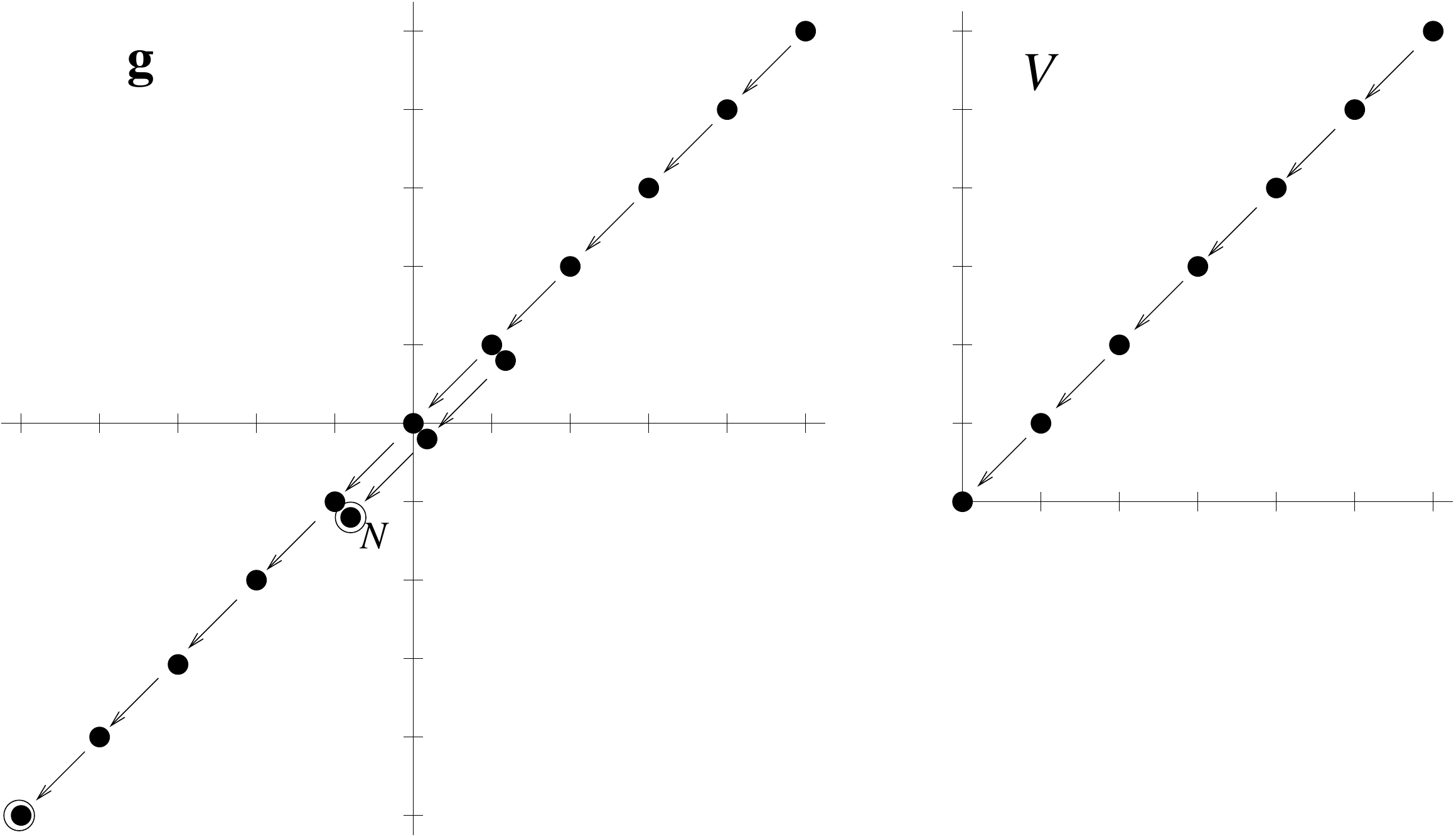}\]from
which $\bar{B}(N)\cong Ext_{MHS}^{1}(\ZZ(-5),\ZZ(0))\cong\CC/\ZZ(5)$.
\end{example}
Henceforth we shall be interested in the Hodge structures (and LMHS)
on $V$ rather than $\fg$. Note that in both examples, these PHS
are ``Calabi-Yau'' in the sense that the leading Hodge number is
$1$. Moreover, $\fg^{-1,1}$ has rank $2$ and is nonabelian; therefore
Griffiths transversality forces the image of a period map into $\Gamma\backslash D$
to be a curve. 

Most importantly, they each give fertile testing-grounds for Conjecture
\ref{conj ggk}. In $\S\S$4-5, we shall verify it (at $x=0$) for
some VHS over $\PP^{1}\backslash\{0,1,\infty\}$ arising from SSD's
over $\QQ$, which have MTG $Sp_{4}$ or $G_{2}$ and maximal unipotent
monodromy about $0$. In both cases this boils down to checking that
a single limiting period $\xi\in\CC/\QQ$ takes a particular form.

In the $Sp_{4}$ case, we may assume given a symplectic basis, so
that the polarization takes the form \begin{equation}\label{e:Q}Q=\left(\begin{array}{cccc}0 & 0 & 0 & 1\\0 & 0 & 1 & 0\\0 & -1 & 0 & 0\\-1 & 0 & 0 & 0\end{array}\right) .\end{equation}
After conjugating by $Sp_{4}(\QQ)$ to have \begin{equation}\label{e:N}N=\left(\begin{array}{cccc}0 & 0 & 0 & 0\\a & 0 & 0 & 0\\e & b & 0 & 0\\f & e & -a & 0\end{array}\right) \end{equation}
and canonically normalizing the local coordinate at $0$, one knows
(cf. \cite{GGK}) that the limiting period matrix takes the form
\[
\Omega_{lim}=\left(\begin{array}{cccc}
1 & 0 & 0 & 0\\
0 & 1 & 0 & 0\\
\frac{f}{2a} & \frac{e}{a} & 1 & 0\\
\xi & \frac{f}{2a} & 0 & 1
\end{array}\right)\;\;\;\;(\xi\in\CC).
\]
The entries other than $\xi$ are rational and correspond to torsion
extension classes. The LMHS is $\QQ$-motivated if and only if $\xi=q\frac{\zeta(3)}{(2\pi i)^{3}}$
($q\in\QQ$).

For $G_{2}$, again after appropriate normalizations, one has
\[
\Omega_{lim}=\left(\begin{array}{ccccccc}
1 & 0 & 0 & 0 & 0 & 0 & 0\\
0 & 1 & 0 & 0 & 0 & 0 & 0\\*
* & * & 1 & 0 & 0 & 0 & 0\\*
\circled{*} & * & * & 1 & 0 & 0 & 0\\*
* & \circled{*} & * & * & 1 & 0 & 0\\
\xi & * & \circled{*} & * & * & 1 & 0\\*
* & \xi & * & \circled{*} & * & 0 & 1
\end{array}\right)
\]
where $*$ denotes rational numbers. In the $\QQ$-motivated scenario,
$\xi=q\frac{\zeta(5)}{(2\pi i)^{5}}$ ($q\in\QQ$). For the same type
of LMHS on $V$ but with the larger M-T group $SO(3,4)$ (instead
of $G_{2}$), the third extension class need not be trivial.That is,
it is $G_{2}$ which forces the circled entries to be rational.

\section{Calabi-Yau variations from mirror symmetry}

In this section we shall briefly describe how a recent result of Iritani
\cite{Ir} allows one to systematically compute LMHS of variations
arising from families of anticanonical toric complete intersections.
We shall carry this out for the 1-parameter, $h^{2,1}=1$ hypergeometric
families of complete intersection C-Y threefolds classified in \cite{DM}.
Each family yields a semistable degeneration over $\QQ$ (cf. $\S2$)
with $X_{0}$ the (suitably blown-up) ``large complex structure limit''
fiber.

Until recently, toric mirror symmetry (e.g., as described in \cite{CK}
or \cite{Mo}) only identified \emph{complex} variations of Hodge
structure arising from the A-model and B-model, because the Dubrovin
connection on quantum cohomology merely provides a $\CC$-local system
on the A-model side. Iritani's mirror theorem says that the integral
structure on this local system provided by the $\hat{\Gamma}$-class
(in the sense described below) completes the A-model $\CC$-VHS to
a $\ZZ$-VHS matching the one arising from $H^{3}$ of fibers on the
B-model side. The upshot is that to compute $\Omega_{lim}$ (at $0$)
for a $1$-parameter family of toric complete intersection Calabi-Yau
3-folds $X_{t}\subset\PP_{\Delta}$
over $\PP^{1}\backslash\{0,1,\infty\}$, we may use what boils down
to characteristic class data from the mirror $X^{\circ}\subset\PP_{\Delta^{\circ}}$.

In each case, $V:=H^{even}(X^{\circ},\CC)=\oplus_{j=0}^{3}H^{j,j}(X^{\circ})$
is a vector space of rank $4$, $\PP:=\PP_{\Delta^{\circ}}=\mathbb{WP}(\delta_{0},\ldots,\delta_{3+r})$
is a weighted projective space%
\footnote{Technically, there are three exceptions to this amongst the examples
we consider, which are weighted projective spaces $\mathbb{WP}(\delta_{0},\ldots,\delta_{n})$
for which the convex hull of $\{e_{1},\ldots,e_{n},-\sum\delta_{i}e_{i}\}$
is not reflexive. As described in \cite{DM}, taking $\Delta$ to
be the convex hull of this set together with $-e_{n}$ yields a reflexive
polytope, and $\PP_{\Delta^{\circ}}$ is the blow-up of the $\mathbb{WP}$
at a point not meeting (hence not affecting) the complete intersections
we consider. Hence we may take $X^{\circ}\subset\mathbb{P}=\mathbb{WP}(\delta_{1},\ldots,\delta_{n})$.%
} (with $\delta_{0}=\delta_{1}=1$), and $X^{\circ}\subset\mathbb{P}$
is smooth%
\footnote{The codimension of the singular locus in $\PP$ is at least $4$ in
every case, so does not meet a sufficiently general $X^{\circ}$.%
} of multidegree $(d_{k})_{k=1}^{r}$ with $\sum d_{k}=\sum\delta_{i}=:m$.
Let $H$ denote the intersection with $X^{\circ}$ of the vanishing
locus of the weight 1 homogeneous coordinate $X_{0}$; write $\tau[H]\in H^{1,1}(X^{\circ})$
for the K\"ahler class and $q=e^{2\pi i\tau}$ for the K\"ahler
parameter. We shall give a general recipe (following \cite[sec. 1]{DK})
for constructing a polarized $\ZZ$-VHS, over $\Delta^{*}:\;0<|q|<\epsilon$,
on $\mathcal{V}:=V\otimes\mathcal{O}_{\Delta^{*}}$.

The easy parts are the Hodge filtration and polarization. Indeed,
we simply put $F^{p}:=\oplus_{j\leq3-p}H^{j,j}\subset V$ and $\mathcal{F}_{e}^{p}:=F^{p}\otimes\mathcal{O}_{\Delta}\subset V\otimes\mathcal{O}_{\Delta}=:\mathcal{V}_{e}.$
Similarly, $Q$ on $\mathcal{V}_{e}$ is induced from the form on
$V$ given by the direct sum of pairings $Q_{j}:H^{j,j}\times H^{3-j,3-j}\to\CC$
defined by $Q_{j}(\alpha,\beta):=(-1)^{j}\int_{X^{\circ}}\alpha\cup\beta$.
A Hodge basis $e=\{e_{i}\}_{i=0}^{3}$ of $H^{even}$, with $e_{i}\in H^{3-i,3-i}(X^{\circ})$
and $[Q]_{e}$ of the form \eqref{e:Q}, is given by $e_{3}=[X^{\circ}]$,
$e_{2}=[H]$, $e_{1}=-[L]$, and $e_{0}=[p]$. Here $L$ is a copy
of $\PP^{1}$ (parametrized by $[X_{0}:X_{1}]$) in $X^{\circ}$ with
$L\cdot H=p$, and $[H]\cdot[H]=m[L]$. The $\{e_{i}\}$ give a Hodge
basis%
\footnote{Note: in all bases we shall run the indices backwards ($e=\{e_{3},e_{2},e_{1},e_{0}\}$,
etc.) for purposes of writing matrices.%
} for $\mathcal{V}_{e}$.

For the local system, we consider the generating series%
\footnote{derivatives $\Phi_{h}^{(k)}$ will be taken with respect to $\tau\,(=\ell(q))$%
} $\Phi_{h}(q):=\frac{1}{(2\pi i)^{3}}\sum_{d\geq1}N_{d}q^{d}$ of
the genus-zero Gromov-Witten invariants of $X^{\circ}$, and define
the small quantum product on $V$ by $e_{2}*e_{2}:=-(m+\Phi_{h}'''(q))e_{1}$
and $e_{i}*e_{j}:=e_{i}\cup e_{j}$ for $(i,j)\ne(2,2)$. This gives
rise to the Dubrovin connection
\[
\nabla:=\text{id}_{V}\otimes d+(e_{2}*)\otimes d\tau,
\]
which we view as a map from $\mathcal{V}\cong V\otimes\mathcal{O}_{\Delta^{*}}\to V\otimes\Omega_{\Delta^{*}}^{1}\cong\mathcal{V}\otimes\Omega_{\Delta^{*}}^{1}$,
and the $\CC$-local system $\mathbb{V}_{\mathbb{C}}:=\ker(\nabla)\subset\mathcal{V}$.

Now define a map $\tilde{\sigma}:V\to V\otimes\mathcal{O}(\Delta)$
by
\[
\tilde{\sigma}(e_{0}):=e_{0},\;\tilde{\sigma}(e_{1}):=e_{1},\;\tilde{\sigma}(e_{2}):=e_{2}+\Phi_{h}''e_{1}+\Phi_{h}'e_{0},
\]
\[
\tilde{\sigma}(e_{3}):=e_{3}+\Phi_{h}'e_{1}+2\Phi_{h}e_{0}.
\]
For any $\alpha\in V$, one easily checks that
\[
\sigma(\alpha):=\tilde{\sigma}\left(e^{-\tau[H]}\cup\alpha\right):=\sum_{k\geq0}\frac{(-1)^{k}\tau^{k}}{k!}\tilde{\sigma}\left([H]^{k}\cup\alpha\right)
\]
satisfies $\nabla\sigma(\alpha)=0$, hence yields an isomorphism $\sigma:V\overset{\cong}{\to}\Gamma(\mathfrak{H},\rho^{*}\mathbb{V}_{\CC})$
(where $\rho:\mathfrak{H}\to\Delta^{*}$ sends $\tau\mapsto q$).
Writing%
\footnote{cf. $\S1$ of \cite{DK} for the more general definition of $\hat{\Gamma}(X^{\circ})$%
} 
\[
\hat{\Gamma}(X^{\circ}):=\exp\left(-\frac{1}{24}{ch}_{2}(X^{\circ})-\frac{2\zeta(3)}{(2\pi i)^{3}}{ch}_{3}(X^{\circ})\right)\in V,
\]
the image of 
\[
\begin{array}{cccc}
\gamma: & K_{0}^{num}(X^{\circ}) & \longrightarrow & \Gamma(\mathfrak{H},\rho^{*}\mathbb{V}_{\CC})\\
 & \xi & \mapsto & \sigma(\hat{\Gamma}(X^{\circ})\cup{ch}(\xi))
\end{array}
\]
defines Iritani's $\ZZ$-local system $\mathbb{V}$ underlying $\mathbb{V}_{\CC}$.
The filtration $W_{\bullet}:=W(N)_{\bullet}$ associated to its monodromy
$T(\gamma(\xi))=\gamma(\mathcal{O}(-H)\otimes\xi)$ satisfies $W_{k}\mathcal{V}_{e}=\left(\oplus_{j\geq3-k/2}H^{j,j}\right)\otimes\mathcal{O}_{\Delta}.$

In order to compute the limiting period matrix of this $\ZZ$-VHS
over $\Delta^{*}$, we shall require a (multivalued) basis $\{\gamma_{i}\}_{i=0}^{3}$
of $\mathbb{V}$ satisfying $\gamma_{i}\in W_{2i}\cap\mathbb{V}$,
$\gamma_{i}\equiv e_{i}\text{ mod }W_{2i-2}$, and $[Q]_{\gamma}=[Q]_{e}$.
The corresponding $\QQ$-basis of $\tilde{\mathbb{V}}|_{q=0}=:V_{lim}$
is given by $\gamma_{i}^{lim}:=\tilde{\gamma}_{i}(0)$ where $\tilde{\gamma}_{i}:=e^{-\tau N}\gamma_{i}\in\Gamma(\Delta,\tilde{\mathbb{V}})$.
Of course, the $e_{i}$ are another basis of $V_{lim,\CC}$, and $\Omega_{lim}={}_{\gamma^{lim}}[\text{id}]_{e}$.
Note that since $N_{lim}=-(2\pi i)Res_{q=0}(\nabla)=-(e_{2}*)|_{q=0}=-(e_{2}\cup)|_{q=0},$
we have 
\[
[N_{lim}]_{e}=\left(\begin{array}{cccc}
0 & 0 & 0 & 0\\
-1 & 0 & 0 & 0\\
0 & m & 0 & 0\\
0 & 0 & 1 & 0
\end{array}\right).
\]

A basis of the form we require is obtained by considering the Mukai
pairing
\[
\langle\xi,\xi'\rangle:=\int_{X^{\circ}}ch(\xi^{\vee}\otimes\xi')\cup Td(X^{\circ})
\]
on $K_{0}^{num}(X^{\circ})$. Since $\langle\xi,\xi'\rangle=Q(\gamma(\xi),\gamma(\xi'))$,
any Mukai-symplectic%
\footnote{That is, $\langle\xi_{i},\xi_{3-j}\rangle=0$ unless $i=j$, in which
case it is $+1$ for $i=0,1$ and $-1$ for $i=2,3$.%
} basis of $K_{0}^{num}(X^{\circ})$ of the form \begin{equation}\label{e:4*}\begin{aligned} & \xi_1 = \mathcal{O} + A\mathcal{O}_H + B\mathcal{O}_L + C\mathcal{O}_p  \\ & \xi_2 = \mathcal{O}_H + D\mathcal{O}_L + E\mathcal{O}_p  \\ & \xi_3 = -\mathcal{O}_L + F\mathcal{O}_p \\ & \xi_4 = \mathcal{O}_p \end{aligned}\end{equation}will
produce $\gamma_{i}:=\gamma(\xi_{i})$ satisfying the above hypotheses.
In this case, taking
\[
\sigma_{\infty}(\alpha):=\lim_{q\to0}\tilde{\sigma}(\alpha),\;\;\gamma_{\infty}(\xi):=\sigma_{\infty}\left(\hat{\Gamma}(X^{\circ})\cup{ch}(\xi)\right),
\]
we have $\gamma_{i}^{lim}=\gamma_{\infty}(\xi_{i})$.

We now run this computation. Let $c(X^{\circ})=1+a[L]+b[p]$ be the
Chern class of $X^{\circ}$; note that there is no $[H]$ term due
to the fact that $X^{\circ}$ is Calabi-Yau. Since the Chern character
is ${ch}(X^{\circ})=3-a[L]+\frac{b}{2}[p]$ and the Todd class is
$Td(X^{\circ})=1+\frac{a}{12}[L]$, $\hat{\Gamma}(X^{\circ})=1+\frac{a}{24}[L]-\frac{b\zeta(3)}{(2\pi i)^{3}}[p].$
This yields\begin{equation*}\Small\begin{aligned} & \gamma^{lim}_3 = e_3 + Ae_2 + \left( -B + \frac{m}{2}A - \frac{a}{24}\right) e_1  + \left(C - B +  \frac{4m+a}{24}A -b\frac{\zeta(3)}{(2\pi i)^3}\right) e_0 \\ &  \gamma^{lim}_2 = e_2 + \left( -D + \frac{m}{2} \right) e_1  + \left( E - D + \frac{4m+a}{24} \right) e_0  \\ &  \gamma^{lim}_1 = e_1  + (F+1)e_0  \\ &  \gamma^{lim}_0 = e_0  \end{aligned}\end{equation*}Imposing
the symplectic condition produces constraints $1+F+A=0$ and $\frac{a+2m}{12}-D+E-AD+B=0$.
After normalizing%
\footnote{$A=0$ is the canonical normalization of the local coordinate; the
remaining choices are made to simplify the end result.%
} $A=B=C=D=0$ ($\implies F=-1,\, E=-\frac{a+2m}{12}$) in \eqref{e:4*},
expressing each $e_{i}$ in terms of $\{\gamma_{i}^{lim}\}$ gives
the columns of \begin{equation}\Omega_{lim} =  \left( \begin{array}{cccc} 1 & 0 & 0 & 0 \\ 0 & 1 & 0 & 0 \\ \frac{a}{24} & -\frac{m}{2}& 1 & 0 \\ \frac{b\zeta(3)}{(2\pi i)^3} & \frac{a}{24} & 0 & 1 \end{array} \right) .\end{equation}

To compute $N$ (with these normalizations), we apply $\mathcal{O}(-H)\otimes$
to the $\xi_{i}$ in $K_{0}^{num}(X^{\circ})$; then 
\[
[T]_{\gamma}=\left[\mathcal{O}(-H)\otimes\right]_{\xi}=\left(\begin{array}{cccc}
1 & 0 & 0 & 0\\
-1 & 1 & 0 & 0\\
0 & m & 1 & 0\\
-\frac{a+2m}{12} & m & 1 & 1
\end{array}\right),
\]
whereupon taking $\log$ gives 
\[
\left[N_{lim}\right]_{\gamma^{lim}}=[N]_{\gamma}=\left(\begin{array}{cccc}
0 & 0 & 0 & 0\\
-1 & 0 & 0 & 0\\
\frac{m}{2} & m & 0 & 0\\
-\frac{a}{12} & \frac{m}{2} &  & 0
\end{array}\right).
\]
The data required to compute $N$ and $\Omega_{lim}$ for the complete
intersection Calabi-Yau (CICY) examples from \cite{DM} is displayed in the 
table below. Here for example ``$\PP^{5}[3,3]$'' means that $X^{\circ}$ is a 
complete
intersection of bidegree $(3,3)$ in $\PP^{5}$. \begin{table}[h]\centering \begin{tabular}{|l|c|c|c|} \hline $X^\circ$ & m & a & b  \\ \hline \hline $\mathbb{P}^{4}[5]$ & 5 & 50 & -200  \\ \hline $\mathbb{P}^{5}[2,4]$ & 8 &56 & -176  \\ \hline $\mathbb{P}^{5}[3,3]$ & 9 & 54 & -144  \\ \hline $\mathbb{P}^{6}[2,2,3]$ &12 & 60 & -144  \\ \hline $\mathbb{P}^{7}[2,2,2,2]$ & 8 & 64 & -128  \\ \hline $\mathbb{WP}_{1,1,1,2,5}^{4}[10]$& 10 & 340 & -2880  \\ \hline $\mathbb{WP}_{1,1,1,1,4}^{4}[8]$ & 8 & 176 & -1184  \\ \hline $\mathbb{WP}_{1,1,2,2,3,3}^{5}[6,6]$ &36 & 792 & -4320  \\ \hline $\mathbb{WP}_{1,1,1,2,2,3}^{5}[4,6]$ &24 & 384 & -1872  \\ \hline $\mathbb{WP}_{1,1,1,1,2}^{4}[6]$ &6 & 84 & -408  \\ \hline $\mathbb{WP}_{1,1,1,1,1,3}^{5}[2,6]$ &12 & 156 & -768  \\ \hline $\mathbb{WP}_{1,1,1,1,2,2}^{5}[4,4]$ &16 & 160 & -576  \\ \hline $\mathbb{WP}_{1,1,1,1,1,2}^{5}[3,4]$ &12 & 96 & -312  \\ \hline
\end{tabular}  \end{table} 

\noindent Since $X^{\circ}$ is smooth, the Chern numbers may be calculated using $$c(X^{\circ})=\frac{c(\mathbb{P})|_{X^{\circ}}}{c(N_{X^{\circ}/\mathbb{P}})}=\frac{\prod_{i=0}^{3+r} (1+\delta_i [H])}{\prod_{k=1}^r (1+d_k [H]).}$$

\begin{rem}
\label{rem 4.1}An interesting case not included amongst the CICY
examples is the so called ``14th case VHS'', labeled ``${\bf I}$''
in {[}loc. cit.{]}. It is shown in \cite{CDLNT} that this VHS arises
from the $Gr_{3}^{W}H^{3}$ of a subfamily contained in the singular
locus of a larger family of hypersurfaces in weighted-projective space.
The LMHS of this sort of example is probably inaccessible to the above
approach. The technique of the next section provides a possible approach
to such examples.
\end{rem}

\section{Calabi-Yau variations from middle convolutions}

Middle convolution is a binary operation on local systems introduced
by Katz \cite{Ka} to study the construction of rigid local systems
on Zariski open subsets $U\subset\PP^{1}$. Recent work of Dettweiler
and others (e.g. \cite{D,DR,DS}) has demonstrated the Hodge-theoretic
importance of this construction, of which we shall give only the briefest
description. The main point for us is that it yields interesting 
Calabi-Yau type variation for which the limiting invariant $\xi$
above may be computed \emph{directly}. In this way we can see where
the rational multiples of $\zeta(3)$ (or $\zeta(5)$) come from,
in contrast to the approach of the last section.

\subsection{The variations}

If $\{\underline a\}$ and $\{\underline b\}$ are finite sets of points
in $\mathbb A^1$ we define $\{\underline c\} = \{\underline a\}\ast\{\underline b\}$ to 
be the set obtained by taking all sums of pairs $a_j + b_k$
from $\{\underline a\}$ and $\{\underline b\}$.
Let $U_{1}=\mathbb{A}_{x}^{1}\backslash\{a_{1},\ldots,a_{m}\}$,
$U_{2}=\mathbb{A}_{z}^{1}\backslash\{b_{1},\ldots,b_{n}\}$, and $U_{3}=\mathbb{A}_{y}^{1}\backslash\{c_{1},\ldots,c_{p}\}$. Let $U\subset\mathbb{A}_{(x,y)}^{2}$ be the
Zariski open where $\prod_{j}(x-a_{j})\prod_{k}((y-x)-b_{k})\prod_{l}(y-c_{l})$
does not vanish. We have a diagram\[\xymatrix{ U_3 \times \PP^1 \ar @{->>} [d]_{\bar{\pi}_3} & & U_2 \\ U_3 & U \ar [l]^{\pi_3} \ar [d]_{\pi_1} \ar [ru]_{\pi_2} \ar @{_(->} [lu]_{\jmath} \\ & U_1} \]where
$\pi_{1}(x,y):=x$, $\pi_{2}(x,y):=y-x$, and $\pi_{3}(x,y)=y$. Given
local systems $\VV_{i}\to U_{i}$ ($i=1,2$), their \emph{middle convolution}
is the local system on $U_{3}$ defined by
\[
\VV_{1}*\VV_{2}:=R^{1}(\bar{\pi}_{3})_{*}\left(\jmath_{*}\left(\pi_{1}^{*}\VV_{1}\otimes\pi_{2}^{*}\VV_{2}\right)\right).
\]

Now suppose (following \cite[sec. 2.6]{D}) that the local systems
are motivic, say $\VV_{i}=Gr_{d_{i}}^{W}P_{i}R^{d_{i}}(\rho_{i})_{!}\QQ_{Y_{i}}$
($i=1,2$), where $Y_{i}\overset{\rho_{i}}{\to}U_{i}$ are smooth
morphisms and $P_{i}\in\QQ[Aut(\rho_{i})]$ idempotents. The situation
is described by the diagram\[\xymatrix{ & & U_2 & Y_2 \ar [l]_{\rho_2} \ar @(ur,dr)[]^{G_2} \\ U_3 & U \ar [ru]^{\pi_2} \ar [l] \ar [d]_{\pi_1}  & Y_1 \boxtimes Y_2 : \ar [l] \ar @/_1pc/ [ll]_{\rho_3} \ar @{=} [r] & \pi_1^* Y_1 \times_U \pi_2^*Y_2  \ \\ & U_1 & Y_1 \ar [l]_{\rho_1} \ar @(ur,dr)[]^{G_1}  }\]and
the middle convolution is described by
\[
\VV_{1}*\VV_{2}=Gr_{d_{1}+d_{2}+1}^{W}(P_{1}\times P_{2})R^{d_{1}+d_{2}+1}(\rho_{3})_{!}\QQ_{Y_{1}\boxtimes Y_{2}}.
\]

By iteratively alternating this construction with quadratic twists
as described in \cite[sec. 2.3-4]{DR}, we obtain a sequence of VHS
$\cV_{d}$ over $\PP^{1}\backslash\{0,1,\infty\}$ of weight $d$,
with $h^{d,0}=1$, for each $d\in\mathbb{N}$. From the motivic perspective,
for each iteration we begin with a family $\cX_{d-1}=\cup_{t\in\PP^{1}\backslash\{0,1,\infty\}}X_{d-1}(t)$
of ``singular Calabi-Yau'' $(d-1)$-folds (with involution $\sigma_{1}$)
over $U_{1}=\PP^{1}\backslash\{0,1,\infty\}$, and the $\sqrt{z}$-double-cover
$Y_{2}$ (with involution $\sigma_{2}$) over $U_{2}=\PP^{1}\backslash\{0,\infty\}$.
Taking $Y_{1}$ to be a quadratic twist (by $\sqrt{t}$ or $\sqrt{1-t}$)
of $\cX_{d-1}\backslash(\cX_{d-1})^{\sigma_{1}}$, we then apply to
$Y_{1}\boxtimes Y_{2}$ the ``projector'' of quotienting by $\sigma_{1}\times\sigma_{2}$,
producing $Y_{3}$. This has a natural compactification to a family
$\cX_{d}$ of ``singular C-Y'' $d$-folds with involution $\sigma_{3}$
over $U_{3}$, in which $Y_{3}=\cX_{d}\backslash(\cX_{d})^{\sigma_{3}}=:\cX_{d}^{-}$.
The local system underlying $\cV_{d}$ is just the $\sigma_{3}$-anti-invariants
in $Gr_{d}^{W}R^{d}(\rho_{3})_{!}\QQ_{Y_{3}}$.

The $\{X_{d}(t)\}$ produced by this algorithm (which are singular
for $d\geq2$) all take the form $w^{2}=f_{d}(x_{1},\ldots,x_{d},t)$,
and include:%
\footnote{Note that our parameter $t$ is inverse to that in \cite{DR}; for
odd $d$, we have also removed a final quadratic twist present in
{[}op. cit.{]} (to rid $f_{d}$ of a factor of $(1-t)$). %
}
\begin{lyxlist}{00.00.0000}
\item [{$d=1$:}] $w^{2}=(1-tx)x(x-1)$\\
(Legendre elliptic curve)
\item [{$d=3$:}] $w^{2}=(1-tx_{3})x_{3}(x_{2}-x_{3})(x_{2}-1)(x_{1}-x_{2})(x_{1}-1)x_{1}$\\
(CY 3-fold family, cf. Remark \ref{rem 5.2})
\item [{$d=6$:}] $w^{2}=\begin{array}[t]{c}
(1-tx_{6})(1-x_{6})(x_{5}-x_{6})x_{5}(x_{4}-x_{5})(1-x_{4})\times\\
(x_{3}-x_{4})x_{3}(x_{2}-x_{3})(1-x_{2})(x_{1}-x_{2})x_{1}(1-x_{1}).
\end{array}$
\end{lyxlist}
Each has an obvious involution $\sigma$ given by $w\mapsto-w$. Write
$\pi_{d}:\mathcal{X}_{d}^{-}\to\PP^{1}\backslash\{0,1,\infty\}$ so
that $\VV_{d}=\left(Gr_{d}^{W}R^{d}(\pi_{d})_{!}\QQ_{\cX_{d}^{-}}\right)^{-\sigma}$.
\begin{prop}
\label{prop51}(i) For $1\leq d\leq6$, $\cV_{d}$ is a VHS of weight
$d$ and rank $d+1$, with Hodge numbers all $1$.

(ii) \cite{DR} $\cV_{6}$ has MTG $G_{2}$.\end{prop}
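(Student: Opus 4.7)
The plan is to prove (i) by induction on $d$ using the known transformation rules for rank and Hodge numbers under middle convolution, and to reduce (ii) to the monodromy computation of \cite{DR} via the Theorem of the Fixed Part.

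For (i), the base case $d=1$ is immediate: the Legendre family $w^{2}=(1-tx)x(x-1)$ gives the weight-$1$ elliptic PVHS on $\mathbb{P}^{1}\setminus\{0,1,\infty\}$ with Hodge numbers $(1,1)$. For the inductive step, assume $\mathcal{V}_{d-1}$ has weight $d-1$, rank $d$, and Hodge numbers all $1$. The construction realizes $\mathcal{V}_{d}$ as the $\sigma$-anti-invariant part of a middle convolution of an appropriate quadratic twist of $\mathcal{V}_{d-1}$ with the rank-$1$ Kummer sheaf $\mathcal{L}_{1/2}$. The weight rises by exactly $1$ since $R^{1}(\bar{\pi}_{3})_{*}$ shifts cohomological degree by one while Kummer twisting is weight-zero. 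For the rank I would invoke Katz's index formula, expressing $\mathrm{rk}(MC_\chi \mathcal{V}_{d-1})$ as a sum of codimensions of relevant local-monodromy eigenspaces at $\{0,1,\infty\}$; using the explicit Jordan data for the local monodromies of $\mathcal{V}_{d-1}$ recorded in \cite{DR}, this evaluates to $d+1$. For the Hodge numbers I would apply the Dettweiler--Sabbah theorem \cite{DS}, which gives each $h^{p,q}(MC_\chi \mathcal{V}_{d-1})$ in terms of the Hodge-refined local-monodromy data of $\mathcal{V}_{d-1}$; beginning from the all-ones Hodge vector and with the quadratic character, direct substitution shows each iteration creates exactly one new top-weight piece while shifting the others, preserving the all-ones pattern.

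For (ii), by \cite{DR} the geometric monodromy group $\Pi$ of $\mathcal{V}_{6}$ is the exceptional group $G_{2}$, acting through its $7$-dimensional irreducible representation on $V$. By Andr\'e's theorem $\Pi\trianglelefteq G^{\mathrm{der}}$, so the Mumford--Tate group $G$ contains $G_{2}$. For the reverse containment, observe that $(\Lambda^{3}V^{*})^{G_{2}}$ is one-dimensional, spanned by the associative $3$-form $\omega$. Since $\Pi=G_{2}$, the Theorem of the Fixed Part \cite{Sc} endows $(\Lambda^{3}V^{*})^{\Pi}$ with the structure of a sub-Hodge structure of the weight-$(-18)$ Hodge structure on $\Lambda^{3}V^{*}$; being a one-dimensional $\mathbb{Q}$-Hodge structure of even weight, it is pure of type $(-9,-9)$. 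Therefore $\omega$ is a Hodge tensor, so $G\subseteq\mathrm{Stab}_{GL(V)}(\omega)=G_{2}$; combined with $G\supseteq G_{2}$ this forces $G=G_{2}$.

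The principal obstacle is the Hodge-number bookkeeping in (i): applying \cite{DS} requires tracking the Jordan decomposition and Hodge refinement of each local-monodromy operator through every step of the iteration, and verifying that no ``collapse'' of Hodge pieces occurs which would produce some $h^{p,q}\geq 2$ and break the principal pattern. A more conceptual alternative would be to compute $H^{d}$ of the singular double cover $X_{d}(t)$ directly via the Leray spectral sequence for the projection forgetting the last coordinate, combined with Picard--Lefschetz analysis of the quadratic singularities, and then extract the $\sigma$-anti-invariant piece; this avoids the representation-theoretic bookkeeping but requires nontrivial resolution-of-singularities work.
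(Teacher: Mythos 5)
Your part (ii) is correct and is actually more self-contained than the paper's treatment, which simply defers to \cite[sec.~9]{KP}: the chain $\Pi=G_{2}\trianglelefteq G^{der}\subseteq G\subseteq\mathrm{Stab}(\omega)$, with the associative $3$-form $\omega$ promoted to a Hodge tensor by the Theorem of the Fixed Part, is the right argument. Two small points to tighten: the full stabilizer of $\omega$ in $GL_{7}$ is $G_{2}\times\mu_{3}$ (scalars $\zeta$ with $\zeta^{3}=1$ also fix a $3$-form), so you need the connectedness of the Mumford--Tate group to land inside $G_{2}$; and you should justify $\dim(\Lambda^{3}V^{*})^{G_{2}}=1$, which follows from the decomposition of $\Lambda^{3}$ of the $7$-dimensional representation into irreducibles of dimensions $1+7+27=35$.

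For part (i) your route is genuinely different from the paper's and, as you yourself flag, is not complete: the Katz rank formula plus the Dettweiler--Sabbah Hodge-number formula \cite{DS} would in principle work, but the ``bookkeeping'' you defer is the entire content of the claim, so as written there is a gap. The paper dissolves that gap with one observation based on data you already invoke for the rank computation: the table in the proof of \cite[Thm.~1.3.1]{DR} shows that the local monodromy of $\mathbb{V}_{d}$ at $t=0$ is a \emph{single} unipotent Jordan block $U(d+1)$ of full size. Maximal unipotency forces the limiting MHS there to be Hodge--Tate with each $Gr^{W(N)}_{2j}$ ($0\leq j\leq d$) one-dimensional; since for a Hodge--Tate LMHS one has $h^{p,d-p}=\dim I^{p,p}=\dim Gr^{W(N)}_{2p}$, all $d+1$ Hodge numbers equal $1$ the moment the rank is $d+1$. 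So once the rank, weight, and the Jordan block at $0$ are read off from \cite{DR}, no inductive tracking of Hodge data through the convolutions is needed. I would either replace your \cite{DS} induction with this argument or, if you keep the induction, actually carry out the local-monodromy and Hodge-vector computations at each of the six steps rather than asserting that they ``preserve the all-ones pattern.''
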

\begin{proof}
How (ii) follows from the results of \cite{DR} is explained in \cite[sec. 9]{KP},
while (i) follows from the proof of Theorem 1.3.1 in \cite{DR}. In
particular, the table in that proof (with $0$ and $\infty$ swapped,
as our $1/t$ is their $\PP^{1}$ parameter) shows that the monodromy
at $t=0$ is a single Jordan block $U(d+1)$, which can only happen
for rank $d+1$ if the Hodge numbers are $(1,1,\ldots,1)$.\end{proof}
\begin{rem}
\label{rem 5.2}The $\{X_{3}(t)\}$ are degree-8 hypersurfaces in
$\mathbb{WP}(1,1,1,1,4)$, which are C-Y 3-folds after desingularization
(or for purposes of computing $(Gr_{3}^{W}R^{3}(\pi_{3})_{!}\QQ_{\cX_{3}^{-}})^{-\sigma}$).
Note that this is \emph{not} the mirror family for which the LMHS
was computed in $\S4$. Its LMHS at $t=0$ does not appear to be accessible
by mirror symmetry, since it belongs to the singular locus of a much
larger variation, and does not meet the large complex structure limit
of this larger family. We also note that while for $d=1,2$ the vanishing
cycle period $\int_{\mu_{t}}\omega_{t}$ ($\S5.2$) is a hypergeometric
function (up to quadratic twist), for $d\geq3$ this is not the case.
So then methods of computing LMHS using Meijer $G$-functions \cite{GL}
would also not be applicable.\end{rem}
\begin{rem}Referring to \cite[p. 940]{DR} and accounting for the inversion and
quadratic twists, the monodromies of $\mathbb{V}_{d}$ are displayed
in the following table, \begin{table}[h]\centering \begin{tabular}{|l|c|c|c|} \hline  & $\text{at }0$ & $\text{at }1$ & $\text{at }\infty $ \\ \hline \hline $d=1$ & $U(2)$ & $U(2)$ & $-U(2)$  \\ \hline $d=3$ & $U(4)$ & $-U(2)\oplus \mathbf{1}^{\oplus 2}$ & $(-U(2))^{\oplus 2}$  \\ \hline $d=6$ & $U(7)$ & $U(2)^{\oplus 2}\oplus U(3)$ & $(-\mathbf{1})^{\oplus 4}\oplus \mathbf{1}^{\oplus 3}$ \\ \hline
\end{tabular}  \end{table} in which $U(n)$ denotes a Jordan block of rank $n$.

For the stalks, we have (writing $D_{t}:=X_{d}(t)^{\sigma}$, $X_{d}^{-}(t):=X_{d}(t)\backslash D_{t}$)
\[
\mathbb{V}_{d,t}\cong Gr_{d}^{W}H_{c}^{d}(X_{d}^{-}(t),\QQ)^{-\sigma}\cong Gr_{d}^{W}H_{d}(X_{d}^{-}(t),\QQ(-d))^{-\sigma},
\]
\[
\mathbb{V}_{d,t}^{\vee}\cong Gr_{-d}^{W}H_{d}(X_{d}(t),D_{t};\QQ)^{-\sigma}\cong Gr_{-d}^{W}H_{d}(X_{d}(t),\QQ)^{-\sigma}.
\]
On each $X_{d}(t)$ there are obvious $\sigma$-anti-invariant topological
$d$-cycles consisting of two sheets ($\pm w$) bounding on components
of $D_{t}$ (e.g. $\mu_{t}$ and $\tau_{t}$ below); clearly such
cycles span $(Gr_{-d}^{W})H_{d}(X_{d}(t),\QQ)^{-\sigma}$. By a topological
argument (omitted here), these may be moved off $D_{t}$, hence belong
to the image of $H_{d}(X_{d}^{-}(t))\to H_{d}(X_{d}(t))$. The resulting
isomorphism $\mathbb{V}_{d,t}\overset{\cong}{\to}\mathbb{V}_{d,t}^{\vee}(-d)$
allows us to pair homology cycles in $Gr_{-d}^{W}H_{d}(X_{d}(t),\QQ)^{-\sigma}$
and write classes in $\mathbb{V}_{d,t}\otimes\CC$ in terms of them.
That is, in a sense we may work as if $X_{d}(t)$ were smooth, which
is immensely convenient for the computations that follow. 
\end{rem}

\subsection{Cauchy residue method}

In each case $(d=1,3,6$), we are after the LMHS at $t=0$. The idea
is to compute $\cF^{d}(\cV_{d})_{nilp}$ by Cauchy residue. 

More precisely, assume $0<t\ll1$ and write $\omega_{t}=\frac{2^{d-1}}{(2\pi i)^{d}}\frac{dx_{1}\wedge\cdots\wedge dx_{d}}{w}\in\Omega^{d}(X_{d}(t))$
for the ``holomorphic form''%
\footnote{The notation means that $\omega_{t}$ pulls back to a holomorphic
form on a desingularization of $X_{d}(t)$; in particular, it gives
a class in $F^{d}Gr_{d}^{W}H_{c}^{d}(X_{d}^{-}(t),\CC)^{-\sigma}$.%
} and 
\[
\tau_{t}=\{(w,x_{1},\ldots,x_{d})\in X_{d}(\RR)\,|\,1\leq x_{1}\leq x_{2}\leq\cdots\leq x_{d}\leq t^{-1}\}
\]
for a family of cycles (with two branches coming from $\pm w$). Note
that there exists a family $\mu_{t}:=\{(w,x_{1},\ldots,x_{d})\in X_{d}(\RR)\,|\,\frac{1}{t}\leq x_{d}\leq\cdots\leq x_{1}<\infty\}$
($0<t\ll1$) of vanishing cycles with $(\mu_{t},\tau_{t})=1$ and
$\int_{\mu_{t}}\omega_{t}\to1$ as $t\to0$ (for example, using $\int_{1}^{\infty}\frac{du}{u\sqrt{u-1}}=\pi$
and the residue approach below). Hence $\tau_{t}$ and $\mu_{t}$
are correctly normalized; that is, they are the extremal members $\gamma_{d}$
resp. $\gamma_{0}$ of an integral symplectic basis $\{\gamma_{j}\}_{j=0}^{d}$
of $\VV_{d}$ over a punctured disk, in which the monodromy about
$t=0$ takes the form%
\footnote{The ``$\pm$'' is $(-1)^{d}$ for $d>1$.%
} \begin{equation}\label{e:T}[T]_{\gamma}=\left(\begin{array}{cccc}1 & 0 & \cdots & 0\\a & 1 & \ddots & \vdots\\\vdots & \ddots & \ddots & 0\\* & \cdots & \pm a & 1\end{array}\right).\end{equation}

Throughout this section, $\oint_{|t|=\epsilon}dt$ denotes integration
counterclockwise from $\arg(t)=-\pi$ to $\arg(t)=\pi$. Recalling
the notation $\ell(t)=\frac{\log(t)}{2\pi i}$, integration yields\begin{equation}\label{eqn a_jk}\Pi_{d}(t):=\int_{\tau_{t}}\omega_{t}=(-1)^{d}\sum_{j=0}^{d}\ell^{j}(t)\sum_{k\geq0}a_{jk}t^{k}\end{equation}whereupon\begin{equation}\label{eqn Cauchy}\frac{1}{2\pi i}\oint_{|t|=\epsilon}\frac{dt}{t}\int_{\tau_{t}}\omega_{t}=\underset{=:\Pi_{d}^{nilp}(\epsilon)}{\underbrace{(-1)^d \sum_{j=0}^{d}a_{j0}\ell^{j}(\epsilon)}}+\underset{\tiny \begin{matrix} \to 0 \\ \text{ with }\epsilon\end{matrix}}{\underbrace{\mathcal{O}(\epsilon\log^{d}\epsilon)}}.\end{equation}In
the remainder of this paper, ``$\equiv$'' shall be used to denote
working modulo $\mathcal{O}(\epsilon\log^{d}\epsilon)$.

If $\omega_{t}^{nilp}$ is the section of $\cF^{d}(\cV_{d})_{nilp}$
with period $1$ against $\mu_{t}$, then $\Pi_{d}^{nilp}(\epsilon)$
in \eqref{eqn Cauchy} is its period at $t=\epsilon$ against $\tau_{\epsilon}$.
Its full period vector takes the form \begin{equation}\label{eqn omega-nilp}[\omega_{\epsilon}^{nilp}]_{\gamma}=\left(\begin{array}{c}1\\a_{10}^{(d-1)}\ell(\epsilon)+a_{00}^{(d-1)}\\\vdots\\\sum_{j=0}^{d-1}a_{j0}^{(1)}\ell^{j}(\epsilon)\\\sum_{j=0}^{d}a_{j0}\ell^{j}(\epsilon)\end{array}\right).\end{equation}Applying
$e^{\ell(\epsilon)N}$ to this must yield\begin{equation}\label{eqn Omega_lim unn}{}^{t}\left(1,a_{00}^{(d-1)},\ldots,a_{00}^{(1)},a_{00}\right) ,\end{equation}from
which we deduce that $\mp a_{00}^{(1)}a=a_{10}$ and $a_{10}^{(d-1)}=-a$. 

So \eqref{eqn Omega_lim unn} is the first column of $\Omega_{lim}$
prior to canonically normalizing the local coordinate. To carry out
this normalization, we make the substitution $t=\alpha s$ in \eqref{eqn a_jk},
where $\ell(\alpha)=-\frac{a_{00}^{(d-1)}}{a_{10}^{(d-1)}}=\frac{a_{00}^{(d-1)}}{a}$,
and rewrite the right-hand side in powers of $s$ and $\ell(s)$.
Writing $\tilde{a}_{jk}$ (more generally $\tilde{a}_{jk}^{(i)}$)
for the modified coefficients and $\tilde{\Pi}_{d}^{nilp}(\epsilon):=(-1)^{d}\sum_{j=0}^{d}\tilde{a}_{j0}\ell^{j}(\epsilon)$
(more generally $\tilde{\omega}_{\epsilon}^{nlp}$) for the modified
periods, we repeat the above computation with the result that \eqref{eqn Omega_lim unn}
is replaced by
\[
^{t}\left(1,\tilde{a}_{00}^{(d-1)},\ldots,\tilde{a}_{00}^{(1)},\tilde{a}_{00}\right)={}^{t}\left(1,0,\ldots,\mp\frac{\tilde{a}_{10}}{a},\tilde{a}_{00}\right)
\]
which is now the correct first column of $\Omega_{lim}$. (Equivalently,
apply $e^{\ell(\alpha)N}$ to \eqref{eqn Omega_lim unn}.) In particular,
the extension class $\xi\in\CC/\QQ$ from the end of $\S$3 is given
by $\tilde{a}_{00}$ (for $d=3$) or $-\tilde{a}_{10}/a$ (for $d=6$).
More information is contained in the following
\begin{prop}
\label{prop ***}The bottom row of the normalized $\Omega_{lim}$
is 
\[
\left(\tilde{a}_{00},-\frac{1!\tilde{a}_{10}}{2},\frac{2!\tilde{a}_{20}}{2^{2}},\ldots,\frac{(d-1)!\tilde{a}_{d-1,0}}{(-2)^{d-1}},1\right).
\]
Moreover, we have $a=2$ and $\alpha=4^{d+1}$.\end{prop}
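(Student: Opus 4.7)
The plan is to determine $a$, $\alpha$, and the bottom row of $\Omega_{lim}$ by building on the first-column computation of \S5.2, using (i) the iterative Hodge-basis structure in the nilpotent orbit, and (ii) leading/sub-leading log asymptotics of $\Pi_d(t)$ via the iterative formula for $\pi_d$ from \S1.

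For the bottom row, I would use the identity $\nabla_\delta\omega^{nilp} = (2\pi i)^{-1} N\omega^{nilp}$ (which follows from $\omega^{nilp}_t = e^{\ell(t)N}\omega_{lim}$) to take $\omega^{nilp,(m)} := C_m(\nabla_\delta)^m\omega^{nilp}$ as a Hodge basis for $F^{d-m}/F^{d-m+1}$. In the $\gamma$-basis the $(m{+}1)$-st column of $\Omega_{lim}$ becomes $C_m(N_\gamma/(2\pi i))^m\tilde c$, where $\tilde c$ is the first column from \S5.2. Matching powers of $\ell(\epsilon)$ in the bottom entry of \eqref{eqn omega-nilp} gives $(N_\gamma^m\tilde c)_d = m!\,\tilde a_{m,0}$, and the constants $C_m$ are fixed by the diagonal-$1$ condition. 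Using the subdiagonal structure $(a,\ldots,a,(-1)^d a)$ of $N_\gamma = \log T_\gamma$ from \eqref{e:T}, one obtains $C_m/(2\pi i)^m = 1/(-a)^m$ for $m\le d-1$, yielding the claimed bottom-row entry $m!\,\tilde a_{m,0}/(-a)^m$; the $m = d$ entry is forced to $1$ by the same normalization, pinning down $\tilde a_{d,0} = (-a)^d/d!$.

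The value $a = 2$ is established either via the classical Legendre monodromy for $d = 1$ (propagated through the middle-convolution iteration) or, equivalently, by matching the leading $\log^d$ coefficient of $\pi_d(t)$ with the identity $\tilde a_{d,0} = (-a)^d/d!$ above. For $\alpha = 4^{d+1}$, the canonical normalization $\tilde c_1 = 0$ translates (via $t = \alpha s$ in \eqref{eqn a_jk}) to $\ell(\alpha) = a_{0,0}^{(d-1)}/a$; extracting the sub-leading $\log^{d-1}$ term of $\pi_d(t)$ inductively from the iterative formula of \S1 --- starting from the direct computation for $\pi_1(t)$ and tracking how each step $\pi_j\mapsto\pi_{j+1}$ shifts the effective $\log$-argument by an additive $\log 4$ --- gives $\ell(\alpha) = (d+1)\log 4/(2\pi i)$, i.e., $\alpha = 4^{d+1}$. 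The hard part will be precisely this sub-leading-log analysis: while the top-degree $\log^d$ coefficient is immediate from the iteration, the $\log^{d-1}$ shift arises from boundary contributions at $x = 1$ and $x = t$ at each iteration and must be tracked carefully through $d$ steps. A secondary subtlety is verifying the $(-1)^m$ sign in the bottom row, which comes from the alternating pattern in $\log T_\gamma$ induced by the $(-1)^d$ subdiagonal entry of \eqref{e:T} (itself forced by the polarization).
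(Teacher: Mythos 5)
Your treatment of the bottom row is essentially the paper's argument: both realize the $(k+1)$-st column of $\Omega_{lim}$ as a normalized $k$-th derivative of the period vector of $\tilde{\omega}_{t}^{nilp}$ (equivalently, $N_{\gamma}^{k}$ applied to the first column, since $[\tilde{\omega}_{t}^{nilp}]_{\gamma}=e^{-\ell(t)N}\tilde{c}$), fix the normalizing constant by requiring the diagonal entry to be $1$, and use single-valuedness of $\nabla_{t\partial_{t}}^{k}\omega_{t}^{nilp}$ to pin down $a_{k0}^{(d-k)}=(-a)^{k}/k!$; the $(-1)^{k}$ sign you flag lands in the same place either way. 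Where you genuinely diverge is the ``moreover'' clause. The paper gets $a=2$ and $\alpha=4^{d+1}$ by reading the two coefficients $a_{d0}=2^{d}/d!$ and $a_{d-1,0}=-\frac{2^{d}(d+1)}{(d-1)!}\ell(4)$ directly off the explicit region-by-region computations of $\S\S$5.3--5.5 (so its proof is really only carried out for $d=1,3,6$) and then solving $\ell(\alpha)=-a_{d-1,0}/(d\,a_{d0})$. You instead propose to extract the $\log^{d}$ and $\log^{d-1}$ coefficients inductively from the iteration $\pi_{j}\mapsto\pi_{j+1}$, claiming each step contributes an additive $\log 4$ to the effective argument. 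If completed, that would be cleaner and uniform in $d$ --- but, as you concede, it is the hard part and is only asserted: the statement that each convolution step shifts the log-argument by exactly $\log 4$ \emph{is} the content of the formula for $a_{d-1,0}$, and the boundary contributions at $x=1$ and $x=t$ do not obviously telescope. Likewise, ``propagating the Legendre monodromy through the middle convolution'' to get $a=2$ is not immediate, since $a$ is the subdiagonal entry in a specific integral symplectic basis; the leading-log matching you offer as an alternative is the paper's actual argument and the safer one. In short: same proof for the displayed formula, and a plausible but unexecuted alternative for the constants, where the paper simply cites its explicit computations.
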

\begin{proof}
From the calculations we carry out for $d=1,3$ and $6$ in $\S\S$5.3-5 below,
it is evident that $(\tilde{a}_{d0}=)\, a_{d0}=\frac{2^{d}}{d!}$
and $a_{d-1,0}=-\frac{2^{d}(d+1)}{(d-1)!}\ell(4)$. In order for $\omega_{t}^{nilp}$
and its derivatives $\nabla_{t\frac{\partial}{\partial t}}\omega_{t}^{nilp}$
to be single-valued, we must have $(\tilde{a}_{k0}^{(d-k)}=)\, a_{k0}^{(d-k)}=\frac{(-a)^{k}}{k!}$
(for $1\leq k<d$) and $a_{d0}=\frac{a^{d}}{d!}$; so $a=2$.

By Griffiths transversality, the columns of the normalized $\Omega_{lim}$
are given by $\left\{ \frac{(2\pi i)^{k}}{k!a_{k0}^{(d-k)}}e^{\ell(t)N}\left(t\frac{\partial}{\partial t}\right)^{k}[\tilde{\omega}_{t}^{nilp}]_{\gamma}\right\} _{k=0}^{d}.$
In particular, the bottom row has entries $\frac{\tilde{a}_{k0}}{a_{k0}^{(d-k)}}=\frac{k!\tilde{a}_{k0}}{(-2)^{k}}$
($1\leq k<d$). Since normalization kills the $k=d-1$ entry, we must
have $\tilde{a}_{d-1,0}=0$. In order for replacing $\ell(t)$ by
$\ell(s)+\ell(\alpha)$ to eliminate the $\ell^{d-1}(t)$ term of
$\Pi_{d}^{nilp}(t)$, we need $\ell(\alpha)=-\frac{a_{d-1,0}}{a_{d0}d}=(d+1)\ell(4).$
\end{proof}
The main point is that $\Pi_{d}^{nilp}$ contains all the information
in $\Omega_{lim}$, and the normalization can be carried out using
$\Pi_{d}^{nilp}$alone: one just makes the substitution that kills
the $\ell^{d-1}(t)$ term. In our computations, this will simply mean
replacing $\Pi_{d}^{nilp}(\epsilon)$ by $\tilde{\Pi}_{d}^{nilp}(\epsilon)=\Pi_{d}^{nilp}(4^{d+1}\epsilon).$

\subsection{Computing the extension classes}

For $d=1$, the (normalized) $\tilde{a}_{00}$ is zero, but Conjecture
\ref{conj ggk} still has content: it says that the \emph{unnormalized}
$a_{00}\in\CC/\QQ$ should be (a rational multiple of) $\ell(q)$
for some $q\in\QQ^{*}$. While the conjecture is known for elliptic
curves (cf. \cite[(III.B.11)]{GGK}), checking it gives an initial
feasibility test for our Cauchy residue approach to $\Omega_{lim}$,
and motivates what shall take place in higher dimension. Referring
to \eqref{eqn Cauchy}, $\frac{1}{2\pi i}\oint_{|t|=\epsilon}\frac{dt}{t}\int_{\tau_{t}}\omega_{t}=$
\[
=\frac{2}{(2\pi i)^{2}}\oint_{|t|=\epsilon}\frac{dt}{t}\int_{1}^{\frac{1}{t}}\frac{dx}{\sqrt{x(x-1)(1-tx)}}.
\]
Substituting $u=\frac{(x-1)t}{1-t}$ yields
\[
=\frac{2}{(2\pi i)^{2}}\oint_{|t|=\epsilon}\frac{dt}{t}\int_{0}^{1}\frac{du}{\sqrt{u(1-u)(u+(1-u)t)}}
\]
\[
=\frac{1}{\pi i}\int_{0}^{1}\left(\oint_{|t|=\epsilon}\frac{1}{\sqrt{u+(1-u)t}}\frac{dt}{2\pi it}\right)\frac{du}{\sqrt{u(1-u)}}.
\]
Let $\eta = \frac{\epsilon}{1+\epsilon}$.  Then, we have the power series
expansions
$$
        \frac{1}{\sqrt{u+(1-u)t}}
         = \frac{1}{\sqrt{u}}
           \sum_{m\geq0}t^{m}{-\frac{1}{2} \choose m}\frac{(1-u)^{m}}{u^{m}}
$$
valid for $u\in[\eta,1]$, and 
$$
    \frac{1}{\sqrt{u+(1-u)t}} 
     = \sum_{m\geq0} t^{-m-\frac{1}{2}}{-\frac{1}{2} \choose m}\frac{u^{m}}{(1-u)^{m+\frac{1}{2}}}
$$
valid for $u\in[0,\eta]$. In the
former expansion, $\oint$ annihilates all but the $m\geq0$ term;
for the latter, we use%
\footnote{Note: it is not correct to ``go twice around the circle'' and kill
$t^{-m-\frac{1}{2}}$. The problem is that $\int_{0}^{1}\frac{du}{\sqrt{u(1-u)(u+(1-u)t)}}$
only matches the analytic continuation of $\int_{\tau_{t}}\omega_{t}$
for $\arg(t)\in(-\pi,\pi)$.%
} \begin{equation}\label{e:halfpowers}\oint_{|t|=\epsilon}t^{-(m+\frac{1}{2})}\frac{dt}{2\pi it}=\frac{(-1)^{m}\epsilon^{-(m+\frac{1}{2})}}{\pi(m+\frac{1}{2})}.\end{equation}Altogether,
the above
\[
=\frac{1}{\pi i}\int_{\eta}^{1}\frac{du}{u\sqrt{1-u}}+\frac{1}{\pi^{2}i}\sum_{m\geq0}\frac{\left|\binom{-\frac{1}{2}}{m}\right|\epsilon^{-(m+\frac{1}{2})}}{(m+\frac{1}{2})}\int_{0}^{\eta}\frac{u^{m-\frac{1}{2}}}{(1-u)^{m+1}}du.
\]
Working modulo $\mathcal{O}(\epsilon\log\epsilon)$, this becomes\tiny
\[
\equiv\frac{1}{\pi i}\left\{ \int_{\eta}^{1}\frac{du}{u}+\sum_{k\geq1}\left|{-\frac{1}{2} \choose k}\right|\int_{\eta}^{1}u^{k-1}du\right\} +\frac{1}{\pi^{2}i}\sum_{m\geq0}\frac{\left|\binom{-\frac{1}{2}}{m}\right|\epsilon^{-(m+\frac{1}{2})}}{(m+\frac{1}{2})}\int_{0}^{\eta}u^{m-\frac{1}{2}}du
\]
\small
\[
\equiv-2\ell(\epsilon)+\underset{-a_{00}}{\underbrace{\frac{1}{\pi i}\left\{ \sum_{k\geq1}\frac{\left|{-\frac{1}{2} \choose k}\right|}{k}+\frac{1}{\pi}\sum_{m\geq0}\frac{\left|\binom{-\frac{1}{2}}{m}\right|}{(m+\frac{1}{2})^{2}}\right\} }}.
\]
\normalsize A short computation now shows that $a_{00}=\frac{-2}{2\pi i}\left\{ \log4+\log4\right\} =\ell(\frac{1}{4^{4}})$,
and certainly $\frac{1}{4^{4}}\in\QQ^{*}$.

Essentially the same thing happens in general: in computing $\Pi_{d}^{nilp}$,
one has to face (for example, generalizing the $\int_{\eta}^{1}$
integral above)\begin{equation}\label{eqn main int}\sum_{k_{1},\ldots,k_{d}\geq0}\left(\prod_{j}\left|{-\frac{1}{2} \choose k_{j}}\right|\right)\int_{[0,1]^d \cap \{u_1\cdots u_d>\epsilon\} }\left(\prod_{j}u_{j}^{k_{j}-1}du_{j}\right)\end{equation}and
hence (as a byproduct) the constants\begin{equation}\label{eqn gamma}\gamma_{n}:=\sum_{k\geq1}\frac{\left|{-\frac{1}{2} \choose k}\right|}{k^{n}}=\frac {1}{2} {}_{n+2}F_{n+1}\left(\left.\begin{array}{c}1,\ldots,1,\frac{3}{2}\\2,\ldots,2\end{array}\right|1\right)\end{equation}for
$n\geq1$. The key observation about them is that (while $\gamma_{1}=\log4$)
\[
\gamma_{n}=q_{n}\zeta(n)+\text{``degenerate'' terms},
\]
where $q_{2}=1$, $q_{3}=2$, $q_{4}=\frac{9}{4}$, $q_{5}=6$, $q_{6}=\frac{79}{16}$,
etc. The generalization of the $\int_{0}^{\eta}$ integral above is
more complicated, with \begin{equation}\label{eqn gammatilde}\tilde{\gamma}_{n}:=\frac{1}{\pi}\sum_{k\geq0}\frac{\left|\binom{-\frac{1}{2}}{k}\right|}{(k+\frac{1}{2})^{n+1}}=\frac{2^{n+1}}{\pi}{}_{n+2}F_{n+1}\left(\left. \begin{array}{c} \frac{1}{2},\ldots,\frac{1}{2}\\ \frac{3}{2},\ldots ,\frac{3}{2} \end{array}\right| 1\right)\end{equation}as
well as some very interesting multiple series appearing. See the Appendix
for evaluation and discussion of the $\gamma_{n}$ and $\tilde{\gamma}_{n}$.

\subsection{Computing the LMHS of $\mathcal{V}_{3}$}

For $d=3$, \eqref{eqn Cauchy} is $\frac{2^{3}}{(2\pi i)^{3}}$ times
\[
\frac{1}{2\pi i}\oint_{|t|=\epsilon}\frac{dt}{t}\int_{1}^{\frac{1}{t}}\int_{1}^{x_{3}}\int_{1}^{x_{2}}\frac{1}{\sqrt{f_{3}(x_{1},x_{2},x_{3},t)}}dx_{1}dx_{2}dx_{3}
\]
which upon substituting $\tilde{X}_{i}=\frac{(x_{i}-1)t}{1-t}$ becomes
\[
=\frac{1}{2\pi i}\oint\frac{dt}{t}\int_{0}^{1}\int_{0}^{\tilde{X}_{3}}\int_{0}^{\tilde{X}_{2}}\frac{\sqrt{1-t}}{\sqrt{\tilde{F}_{3}(\tilde{X}_{1},\tilde{X}_{2},\tilde{X}_{3},t)}}d\tilde{X}_{1}d\tilde{X}_{2}d\tilde{X}_{3},
\]
where\small 
\[
\tilde{F}_{3}(\tilde{X}_{1},\tilde{X}_{2},\tilde{X}_{3},t)=\prod_{i=1}^{3}(1-\tilde{X}_{i})\prod_{i=1}^{2}(\tilde{X}_{i}-\tilde{X}_{i+1})\prod_{i=1,3}\left\{ (1-\tilde{X}_{i})t+\tilde{X}_{i}\right\} .
\]
\normalsize Note that the region of integration is now independent
of $t$; moving the $\oint$ inside and performing the further substitutions
$\tilde{X}_{3}=X_{3}$, $\tilde{X}_{2}=X_{2}X_{3}$, $\tilde{X}_{1}=X_{1}X_{2}X_{3}$,
the above integral\begin{equation}\label{e:tint3}=\iiint_{[0,1]^{\times3}}\left(\oint_{|t|=\epsilon}\frac{\sqrt{1-t}}{\sqrt{F_{3}(X_{1},X_{2},X_{3},t)}}\frac{dt}{2\pi it}\right)dX_{1}dX_{2}dX_{3}\end{equation}where
$F(X_{1},X_{2},X_{3},t)=$
\[
\left\{ (1-X_{3})t+X_{3}\right\} \left\{ (1-X_{1}X_{2}X_{3})t+X_{1}X_{2}X_{3}\right\} X_{1}X_{2}\prod_{i=1}^{3}(1-X_{i}).
\]

Next we break $[0,1]^{\times3}$ in \eqref{e:tint3} into 4 regions
according to whether (I) $X_{1}X_{2}X_{3}>\eta:=\frac{\epsilon}{1+\epsilon}$,
(IIa) $X_{2}X_{3}>\eta>X_{1}X_{2}X_{3}$, (IIb) $X_{3}>\eta>X_{2}X_{3}$,
or (III) $\eta>X_{3}$. The expansion of $F_{3}(X_{1},X_{2},X_{3},t)^{-\frac{1}{2}}$
depends on the region:
\begin{lyxlist}{00.00.0000}
\item [{(I)}] $\sum_{a,b\geq0}t^{a+b}\binom{-\frac{1}{2}}{a}\binom{-\frac{1}{2}}{b}\frac{X_{1}^{-a}X_{2}^{-a}X_{3}^{-a-b}(1-X_{3})^{b-\frac{1}{2}}(1-X_{1}X_{2}X_{2})^{a}}{\sqrt{(1-X_{1})(1-X_{2})}}$,
\item [{(IIa,b)}] $\sum_{a,b\geq0}t^{a-b-\frac{1}{2}}\binom{-\frac{1}{2}}{a}\binom{-\frac{1}{2}}{b}\frac{X_{1}^{b}X_{2}^{b}X_{3}^{b-a}(1-X_{3})^{a-\frac{1}{2}}(1-X_{1}X_{2}X_{3})^{-b}}{\sqrt{(1-X_{1})(1-X_{2})}}$,
\item [{(III)}] $\sum_{a,b\ge0}t^{-a-b}\binom{-\frac{1}{2}}{a}\binom{-\frac{1}{2}}{b}\frac{X_{1}^{a}X_{2}^{a}X_{3}^{a+b}(1-X_{3})^{-a-\frac{1}{2}}(1-X_{1}X_{2}X_{3})^{-b}}{\sqrt{(1-X_{1})(1-X_{2})}}$.
\end{lyxlist}
For purposes of working modulo $\mathcal{O}(\epsilon\log^{3}\epsilon)$,
computation shows that we may replace $\sqrt{1-t}$ and $(1-X_{1}X_{2}X_{3})$
by $1$; whereas $(1-X_{i})^{-\frac{1}{2}}$ is always expanded as%
\footnote{the Pochhammer symbol $(\frac{1}{2})_{k}=\left|\binom{-\frac{1}{2}}{k}\right|$.%
} $\sum_{k_{i}\geq0}(\frac{1}{2})_{k_{i}}X_{i}^{k_{i}}$. (The special
case $(1-X_{3})^{a-\frac{1}{2}}$ is expanded when $a=0$ and replaced
by $1$ when $a>0$.) We may also replace $\eta$ by $\epsilon$ in
the triple integrals, which become (I) $\int_{\epsilon}^{1}\int_{\frac{\epsilon}{X_{3}}}^{1}\int_{\frac{\epsilon}{X_{2}X_{3}}}^{1}$,
(IIa) $\int_{\epsilon}^{1}\int_{\frac{\epsilon}{_{X_{3}}}}^{1}\int_{0}^{\frac{\epsilon}{X_{2}X_{3}}}$,
(IIb) $\int_{\epsilon}^{1}\int_{0}^{\frac{\epsilon}{X_{3}}}\int_{0}^{1}$,
and (III) $\int_{0}^{\epsilon}\int_{0}^{1}\int_{0}^{1}$. Region (III)
makes no contribution.

Performing the $\oint$ in region (I) kills all terms except $(a,b)=(0,0)$.
So the portion of \eqref{e:tint3} over region (I) is
\[
\int_{\epsilon}^{1}\int_{\frac{\epsilon}{X_{3}}}^{1}\int_{\frac{\epsilon}{X_{2}X_{3}}}^{1}\prod_{i=1}^{3}X_{i}^{-1}(1-X_{i})^{-\frac{1}{2}}dX_{i}=
\]
\[
\sum_{k_{1},k_{2},k_{3}\geq0}\left(\prod_{i=1}^{3}\left(\frac{1}{2}\right)_{k_{i}}\right)\int_{\epsilon}^{1}\int_{\frac{\epsilon}{X_{3}}}^{1}\int_{\frac{\epsilon}{X_{2}X_{3}}}^{1}\left(\prod_{j=1}^{3}X_{j}^{k_{j}-1}dX_{j}\right)
\]
which is now \eqref{eqn main int} with $d=3$. Repeatedly applying
the formula\begin{equation}\label{e:log}-\int_{\mu}^{1}(\log^{r}x)x^{k-1}dx=\left\{ \begin{array}{cc}\frac{1}{r+1}\log^{r+1}\mu, & k=0\\\sum_{\ell=0}^{r}\frac{(-1)^{\ell}\mu^{k}r!}{k^{\ell+1}(r-\ell)!}\log^{r-\ell}\mu+\frac{(-1)^{r+1}r!}{k^{r+1}}, & k\neq0\end{array}\right.\end{equation}and
throwing out terms with positive powers of $\epsilon$, we arrive
at \begin{equation}\label{e:sharp1}-\frac{1}{6}\log^{3}\epsilon+\frac{3}{2}\gamma_{1}\log^{2}\epsilon+(3\gamma_{2}-3\gamma_{1}^{2})\log\epsilon+(\gamma_{1}^{3}-6\gamma_{1}\gamma_{2}+3\gamma_{3}).\end{equation}For
region (IIa,b), applying \eqref{e:halfpowers} and computing the triple
integrals (and simplifying results using \eqref{e:gteval}) yields\begin{equation}\label{e:sharp2}\frac{\gamma_{1}}{2}\log^{2}\epsilon-(2\gamma_{1}^{2}+\beta)\log\epsilon+(\gamma_{1}^{3}-2\gamma_{1}\gamma_{2}+\gamma_{1}\beta+\nu-\psi)\end{equation}for
(IIa) and\begin{equation}\label{e:sharp3}-\delta\log(\epsilon)+(\gamma_1\delta+\nu')\end{equation}for
(IIb). The meaning of the assorted Greek letters is 
\[
\nu:=\frac{1}{\pi}\sum{}^{'}\frac{(\frac{1}{2})_{k_{1}}(\frac{1}{2})_{k_{2}}(\frac{1}{2})_{a}(\frac{1}{2})_{b}}{(b-a+\frac{1}{2})(b+k_{1}+\frac{1}{2})(a+k_{1})(a+k_{2})},
\]
\[
\nu':=\frac{1}{\pi}\sum{}^{'}\frac{(\frac{1}{2})_{k_{1}}(\frac{1}{2})_{k_{2}}(\frac{1}{2})_{a}(\frac{1}{2})_{b}}{(b-a+\frac{1}{2})(b+k_{1}+\frac{1}{2})(b+k_{2}+\frac{1}{2})(a+k_{1})},
\]
\[
\beta:=\frac{1}{\pi}\sum{}^{'}\frac{(\frac{1}{2})_{a}(\frac{1}{2})_{b}}{(b+\frac{1}{2})a(a+b+\frac{1}{2})},\;\;\;\;\delta:=\frac{1}{\pi}\sum{}^{'}\frac{(\frac{1}{2})_{a}(\frac{1}{2})_{b}}{(b+\frac{1}{2})^{2}(a+b+\frac{1}{2})},
\]
\[
\psi:=\frac{1}{\pi}\sum{}^{'}\frac{(\frac{1}{2})_{a}(\frac{1}{2})_{b}}{(b+\frac{1}{2})a^{2}(a+b+\frac{1}{2})},
\]
where $\sum{}^{'}$ denotes summation over all 2,3, or 4-tuples of
non-negative integers for which the denominator is nonzero.

Now it is easy to prove that $\beta+\delta=\gamma_{1}\tilde{\gamma}_{1}+\tilde{\gamma}_{2}=2\gamma_1^2 +\gamma_2$.
Adding \eqref{e:sharp1}, \eqref{e:sharp2}, and \eqref{e:sharp3},
replacing $\log\epsilon$ by $\log s+4\gamma_{1}$, and using \eqref{e:g1eval}-\eqref{e:g3eval}
gives
\[
-\frac{1}{6}\log^{3}s+2\zeta(2)\log s+\left(6\zeta(3)-2\gamma_{1}\zeta(2)-\frac{8}{3}\gamma_{1}^{3}+\nu+\nu'-\psi\right).
\]
The following will be proved in $\S5.5$:
\begin{lem}
\label{lem:g2=00003D}$\nu+\nu'-\psi=\frac{8}{3}\gamma_{1}^{3}+2\gamma_{1}\zeta(2)-12\zeta(3)$.
\end{lem}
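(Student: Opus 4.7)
The identity will be deduced from the fact, established in Proposition \ref{prop51}(ii), that $\mathcal{V}_6$ has Mumford-Tate group $G_2$. As explained in the $\fg_2$ example of $\S3$, for a principal VHS with $G_2$-MTG at a point of maximal unipotent monodromy, each entry of $\Omega_{lim}$ corresponding to a third extension class (the circled positions in the displayed $7\times 7$ matrix) is forced to be rational; equivalently, the transcendental part of any such entry must vanish. The plan is to compute one of these circled entries by the Cauchy residue method of $\S5.2$, read off the vanishing of its transcendental part, and recognize the resulting linear relation as the asserted identity.

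Concretely, I would perform for $d=6$ the same regional residue expansion carried out for $d=3$ in $\S5.4$: after the substitutions $\tilde X_i=(x_i-1)t/(1-t)$ and the cascading $X_i=\tilde X_i/\tilde X_{i+1}$, break $[0,1]^{\times 6}$ into seven regions according to the sizes of $X_1\cdots X_i$ relative to $\eta=\epsilon/(1+\epsilon)$, expand the integrand in each region as a series in half-integer or integer powers of $t$ modulo $\mathcal{O}(\epsilon\log^6\epsilon)$, and perform the $\oint$ using \eqref{e:halfpowers} and \eqref{e:log}. Each region contributes a combination of the constants $\gamma_n$ and $\tilde\gamma_n$ of the Appendix together with multiple sums of $\nu$-, $\nu'$-, $\psi$-type (and their natural variants).

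Next, Proposition \ref{prop ***} together with Griffiths transversality identifies the circled entry corresponding to the $\QQ(-3)\to\QQ(0)$ third extension as an explicit $\QQ$-linear combination of coefficients $\tilde a_{jk}$ and their covariant derivatives. Collecting only those regional contributions that survive in this entry yields an expression of the shape $c(\nu+\nu'-\psi) + (\text{terms in }\gamma_n,\tilde\gamma_n)$ for some $c\in\QQ^*$; using $\beta+\delta=2\gamma_1^2+\gamma_2$, $\gamma_1=\log 4$, $\gamma_2=\zeta(2)$, $\gamma_3=2\zeta(3)$, and the Appendix identities expressing $\tilde\gamma_n$ in terms of $\gamma_n$ and Riemann zeta values, the second summand collapses modulo $\QQ$ to $-c\bigl(\tfrac{8}{3}\gamma_1^3+2\gamma_1\zeta(2)-12\zeta(3)\bigr)$. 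The $G_2$ constraint forces the whole entry to be rational, and the lemma follows.

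The main obstacle is carrying out the six-fold, seven-region iterated residue with sufficient bookkeeping to cleanly isolate the third-extension entry from the fifth-extension entry (which houses the $\zeta(5)$ piece discussed in $\S5.5$), and to verify the $\gamma_n$- and $\tilde\gamma_n$-collapses claimed above; the $\pi$-normalizations built into the definitions of $\nu,\nu',\psi,\tilde\gamma_n$ require particular care. An appealing but possibly harder alternative would be to derive the identity structurally from the iterated Jacobian fibration $\bar B(N)\twoheadrightarrow\cdots\twoheadrightarrow\bar B(N)_{(k)}$ of $\S3$, by directly exhibiting the $k=3$ stratum as pointwise trivial for $\fg=\fg_2$ and then evaluating the period of a single test class against the residue integral, thereby bypassing the full $d=6$ expansion.
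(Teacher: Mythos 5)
Your proposal is essentially the paper's own proof: in $\S5.5$ the authors compute the $\ell^{3}(s)$-coefficient $\tilde a_{30}$ of $\tilde\Pi_{6}^{nilp}$ from regions (I) and (IIa,b) of the $d=6$ residue expansion, identify it via Proposition \ref{prop ***} with a circled (third-extension) entry of $\Omega_{lim}$, and invoke Proposition \ref{prop51}(ii) to conclude that $G_{2}$ forces this entry to be rational. The one step you elide is that rationality alone would give the identity only up to a rational multiple of $(2\pi i)^{3}$; the exact equality holds because $\tilde a_{30}$ is $(2\pi i)^{-3}$ times a real quantity, hence lies in $i\RR$, and the only rational purely imaginary number is $0$.
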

Reinstating the factor of $\frac{2^{3}}{(2\pi i)^{3}}$, we have
\begin{thm}
\label{thm:d=00003D3} For $d=3$, the canonically
normalized $\tilde{\Pi}_{d}^{nilp}(s)$ is given by
\[
-\frac{4}{3}\ell^{3}(s)+16\frac{\zeta(2)}{(2\pi i)^{2}}\ell(s)\underset{\tilde{a}_{00}}{\underbrace{-48\frac{\zeta(3)}{(2\pi i)^{3}}}}.
\]

\end{thm}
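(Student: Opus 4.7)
The theorem is essentially a bookkeeping statement: the three regional contributions \eqref{e:sharp1}, \eqref{e:sharp2}, \eqref{e:sharp3} to $\Pi_{3}^{nilp}(\epsilon)$ have already been tabulated, and the plan is to add them up, perform the canonical normalization, reduce the hypergeometric constants to $\zeta$-values via the Appendix identities, and invoke the $G_{2}$-identity to eliminate the remaining multiple sums. The prefactor $\tfrac{2^{3}}{(2\pi i)^{3}}$ is then reinstated.

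Concretely, I first add \eqref{e:sharp1}, \eqref{e:sharp2}, \eqref{e:sharp3} and collect powers of $\log\epsilon$. The $\log^{3}\epsilon$-coefficient is $-\tfrac{1}{6}$; the $\log^{2}\epsilon$-coefficient is $2\gamma_{1}$; the $\log\epsilon$-coefficient is $3\gamma_{2}-5\gamma_{1}^{2}-(\beta+\delta)$, which by the identity $\beta+\delta=\gamma_{1}\tilde{\gamma}_{1}+\tilde{\gamma}_{2}=2\gamma_{1}^{2}+\gamma_{2}$ (quoted just before Lemma \ref{lem:g2=00003D}) simplifies to $2\gamma_{2}-7\gamma_{1}^{2}$; and the constant term collapses to $4\gamma_{1}^{3}-7\gamma_{1}\gamma_{2}+3\gamma_{3}+\nu+\nu'-\psi$ (using $\beta+\delta=2\gamma_{1}^{2}+\gamma_{2}$ a second time).

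Next, following Proposition \ref{prop ***}, I canonically normalize via $\log\epsilon=\log s+4\gamma_{1}$ (since $\alpha=4^{d+1}=4^{4}$ and $\gamma_{1}=\log 4$) and expand $L^{k}=(\log s+4\gamma_{1})^{k}$. The $\log^{2}s$-terms cancel as required, and combining with \eqref{e:g1eval}--\eqref{e:g3eval} (which give $\gamma_{2}=\zeta(2)-\tfrac{\gamma_{1}^{2}}{2}$ and the analogous expansion of $\gamma_{3}$) turns the $\log s$-coefficient into $2\zeta(2)$ and the constant into $-\tfrac{8}{3}\gamma_{1}^{3}-2\gamma_{1}\zeta(2)+6\zeta(3)+\nu+\nu'-\psi$. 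Applying Lemma \ref{lem:g2=00003D} then cancels $-\tfrac{8}{3}\gamma_{1}^{3}-2\gamma_{1}\zeta(2)$ against $\nu+\nu'-\psi$ and replaces $6\zeta(3)$ by $-6\zeta(3)$, yielding
\[
\tfrac{1}{2^{3}}(2\pi i)^{3}\tilde{\Pi}_{3}^{nilp}(s)\equiv -\tfrac{1}{6}\log^{3}s+2\zeta(2)\log s-6\zeta(3).
\]
Multiplying by $\tfrac{2^{3}}{(2\pi i)^{3}}$ and rewriting $\tfrac{\log s}{2\pi i}=\ell(s)$ gives the three coefficients $-\tfrac{4}{3}$, $16\tfrac{\zeta(2)}{(2\pi i)^{2}}$, and $-48\tfrac{\zeta(3)}{(2\pi i)^{3}}$ of the theorem.

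The only genuinely nontrivial ingredient is Lemma \ref{lem:g2=00003D}; the rest is routine binomial-series algebra together with the zeta-value reductions of the $\gamma_{n}$'s worked out in the Appendix. I expect the main obstacle to lie entirely in establishing that $G_{2}$-identity, which I will attack separately in $\S5.5$ by reinterpreting the three multiple sums as limits of a single period integral on a partial boundary component of $D_{G_{2}}$ where the third extension class is forced to vanish by the Mumford-Tate constraint.
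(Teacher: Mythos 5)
Your proposal is correct and follows essentially the same route as the paper: add \eqref{e:sharp1}--\eqref{e:sharp3}, substitute $\log\epsilon=\log s+4\gamma_{1}$, reduce the $\gamma_{n}$ via \eqref{e:g1eval}--\eqref{e:g3eval} to reach the constant $6\zeta(3)-2\gamma_{1}\zeta(2)-\tfrac{8}{3}\gamma_{1}^{3}+\nu+\nu'-\psi$, invoke Lemma \ref{lem:g2=00003D}, and reinstate $\tfrac{2^{3}}{(2\pi i)^{3}}$; your intermediate coefficients ($2\gamma_{1}$ for $\log^{2}\epsilon$, $2\gamma_{2}-7\gamma_{1}^{2}$ for $\log\epsilon$, $4\gamma_{1}^{3}-7\gamma_{1}\gamma_{2}+3\gamma_{3}+\nu+\nu'-\psi$ for the constant) all check out. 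The paper likewise defers Lemma \ref{lem:g2=00003D} to $\S5.5$, where it is deduced from the rationality (hence vanishing, being purely imaginary) of $\tilde{a}_{30}$ in the $d=6$ computation forced by the $G_{2}$ Mumford--Tate group, which is consistent with the strategy you sketch.
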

We conclude that the extension class $\xi=\tilde{a}_{00}\in\CC/\QQ$
satisfies Conjecture 2.3.

\subsection{The $G_{2}$-VHS $\mathcal{V}_{6}$}

The comparable simplifications on \eqref{eqn Cauchy} for $d=6$ lead
to ($\frac{2^{6}}{(2\pi i)^{6}}$ times)\begin{equation}\label{e:5.5**}\int_{[0,1]^{\times6}}\left(\oint_{|t|=\epsilon}\frac{\sqrt{X_{5}X_{6}}\sqrt{1-t}}{\sqrt{F_{6}(X_{1},\ldots,X_{6},t)}}\frac{dt}{2\pi it}\right)dX_{1}\cdots dX_{6},\end{equation}where $F(X_1,\ldots,X_6,t)=$
\[
X_{1}X_{2}\prod_{i=1}^{6}(1-X_{i})\prod_{j=1,3,5}\left\{ (1-X_{j}\cdots X_{6})t+X_{j}\cdots X_{6}\right\} .
\]
The region of integration breaks as before into (I) $X_{1}\cdots X_{6}>\eta$,
(IIa,b) $X_{3}\cdots X_{6}>\eta>X_{1}\cdots X_{6}$, (IIIa,b) $X_{5}X_{6}>\eta>X_{3}\cdots X_{6}$,
and (IVa,b) $\eta>X_{5}X_{6}$. Working modulo $\mathcal{O}(\epsilon\log^{6}\epsilon)$,
the region (I) integral again is just \eqref{eqn main int}, and yields\small
\[
\frac{1}{720}\log^{6}\epsilon-\frac{\gamma_{1}}{20}\log^{5}\epsilon+\left\{ -\frac{\gamma_{2}}{4}+\frac{5\gamma_{1}^{2}}{8}\right\} \log^{4}\epsilon+\left\{ -\gamma_{3}+5\gamma_{1}\gamma_{2}-\frac{10}{3}\gamma_{1}^{3}\right\} \log^{3}\epsilon
\]
\[
+\left\{ -3\gamma_{4}+15\gamma_{1}\gamma_{3}+\frac{15}{2}\gamma_{2}^{2}-30\gamma_{1}^{2}\gamma_{2}+\frac{15}{2}\gamma_{1}^{4}\right\} \log^{2}\epsilon+\left\{ -6\gamma_{5}+30\gamma_{1}\gamma_{4}\right.
\]
\[
\left.+30\gamma_{2}\gamma_{3}-60\gamma_{1}\gamma_{2}^{2}-60\gamma_{1}^{2}\gamma_{3}+60\gamma_{1}^{3}\gamma_{2}-6\gamma_{1}^{5}\right\} \log\epsilon+\left\{ -6\gamma_{6}+30\gamma_{1}\gamma_{5}+30\gamma_{2}\gamma_{4}\right.
\]
\[
\left.+15\gamma_{3}^{2}-20\gamma_{2}^{3}-120\gamma_{1}\gamma_{2}\gamma_{3}-60\gamma_{1}^{2}\gamma_{4}+90\gamma_{1}^{2}\gamma_{2}^{2}+60\gamma_{1}^{3}\gamma_{3}-30\gamma_{1}^{4}\gamma_{2}+\gamma_{1}^{6}\right\} .
\]
\normalsize Notice (in view of \eqref{e:g5eval}) the $-36\zeta(5)$
in the coefficient of $\log\epsilon$.

Unfortunately, other regions produce some series we are (at present)
unable to evaluate: for instance, the coefficient of $\log\epsilon$
in the (IIb) integral contains the term

\tiny
\[
\frac{1}{\pi}\sum{}^{'}\frac{(\frac{1}{2})_{k_{1}}(\frac{1}{2})_{k_{2}}(\frac{1}{2})_{k_{3}}(\frac{1}{2})_{k_{4}}(\frac{1}{2})_{a_{1}}(\frac{1}{2})_{a_{2}}(\frac{1}{2})_{a_{3}}}{(a_{3}-a_{2}-a_{1}+\frac{1}{2})(k_{1}+a_{3}+\frac{1}{2})(k_{2}+a_{3}+\frac{1}{2})(k_{3}-k_{2}-a_{2})(k_{4}-k_{3})(k_{4}+a_{1})}.
\]
\normalsize So we will limit ourselves here to evaluating only the
first four terms of $\tilde{\Pi}_{6}^{nilp}$. Adding the contributions
from (IIa,b) to the first line of the region (I) result, gives
\[
\frac{1}{6!}\log^{6}\epsilon-\frac{7\gamma_{1}}{5!}\log^{5}\epsilon+\left\{ \frac{49}{48}\gamma_{1}^{2}-\frac{5}{24}\zeta(2)\right\} \log^4(\epsilon)
\]
\[
+\left\{ -\frac{109}{12}\gamma_{1}^{3}+\frac{37}{6}\gamma_{1}\zeta(2)-2\zeta(3)-\frac{\nu+\nu'-\psi}{6}\right\} \log^{3}\epsilon.
\]
Normalizing this and multiplying by $\frac{1}{(2\pi i)^{6}}$, modulo
$\mathcal{O}(\log^{2}s)$ we have $\frac{1}{2^{6}}\tilde{\Pi}_{6}^{nilp}(s)\equiv$\small
\[
\frac{1}{6!}\ell^{6}(s)\underset{\tilde{a}_{40}/2^{6}}{\underbrace{-\frac{5\zeta(2)}{24(2\pi i)^{2}}}}\ell^{4}(s)+\underset{\tilde{a}_{30}/2^{6}}{\underbrace{\frac{1}{(2\pi i)^{3}}\left(\frac{4}{9}\gamma_{1}^{3}+\frac{1}{3}\gamma_{1}\zeta(2)-2\zeta(3)-\frac{\nu+\nu'-\psi}{6}\right)}}\ell^{3}(s).
\]
\normalsize Referring to the end of $\S3$, Proposition \ref{prop ***}
now implies that $\tilde{a}_{40},\tilde{a}_{30}\in\QQ$. For $\tilde{a}_{40}$,
this is clearly the case; but $\tilde{a}_{30}$ belongs to $i\RR$
and so must be zero, proving Lemma \ref{lem:g2=00003D}. This is particularly
striking, as our knowledge that $\tilde{a}_{30}$ is rational depends
on Proposition \ref{prop51}(ii). So it is thanks to $G_{2}$ that
we can evaluate $\nu+\nu'-\psi$, and with it, the $d=3$ LMHS.

For $d=6$, we do expect Conjecture \ref{conj ggk} to remain true:
\begin{conjecture}
\label{conj:d=00003D6}The canonically normalized $\tilde{\Pi}_{6}^{nilp}(s)$
is given by
\[
\frac{4}{45}\ell^{6}(s)+\frac{5}{9}\ell^{4}(s)+q_{2}\ell^{2}(s)+q_{1}\frac{\zeta(5)}{(2\pi i)^{5}}\ell(s)+q_{0},
\]
where $q_{0},q_{1},q_{2}\in\QQ$.
\end{conjecture}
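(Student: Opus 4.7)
The plan is to reduce Conjecture \ref{conj:d=00003D6} to Conjecture \ref{conj ggk} applied to $\cV_6$, assembling the ingredients already developed. The $\ell^6(s)$ and $\ell^4(s)$ coefficients ($\tilde a_{60}=\tfrac{4}{45}$ and $\tilde a_{40}=\tfrac{5}{9}$) come directly from the region (I)+(IIa,b) computation of §5.5; the $\ell^5(s)$ term vanishes by the canonical normalization in Proposition \ref{prop ***}; and the $\ell^3(s)$ term vanishes by Lemma \ref{lem:g2=00003D}. It remains to show that the coefficients of $\ell^2(s)$ and $\ell^0(s)$ are rational, and that the coefficient of $\ell(s)$ is a rational multiple of $\zeta(5)/(2\pi i)^5$.

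For the rationality of $\tilde a_{20}$ and $\tilde a_{00}$, I would invoke Proposition \ref{prop51}(ii) together with the $G_2$-constrained shape of $\Omega_{lim}$ displayed at the end of §3. In the notation of Proposition \ref{prop ***}, the bottom row of the normalized $\Omega_{lim}$ reads
\[
\left(\tilde a_{00},\,-\tfrac{\tilde a_{10}}{2},\,\tfrac{\tilde a_{20}}{2},\,-\tfrac{3\tilde a_{30}}{4},\,\tfrac{3\tilde a_{40}}{2},\,-\tfrac{15\tilde a_{50}}{4},\,1\right),
\]
and matching this termwise against the bottom row $(\ast,\xi,\ast,\circled{\ast},\ast,0,1)$ of the $G_2$-matrix from §3 forces $\tilde a_{00},\tilde a_{20}\in\QQ$ (coming from the rational $\ast$-entries) and identifies $-\tilde a_{10}/2$ with the single non-torsion extension class $\xi$ in the LMHS. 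Setting $q_0:=\tilde a_{00}$ and $q_2:=\tilde a_{20}$ handles those two terms. Finally, Conjecture \ref{conj ggk} applied to the SSD arising from the middle-convolution construction of §5.1 for $\cV_6$ (which is defined over $\QQ$) forces $\xi\in\CC/\QQ(5)$ to be a rational multiple of $\zeta(5)/(2\pi i)^5$, so that $\tilde a_{10}=q_1\zeta(5)/(2\pi i)^5$ with $q_1\in\QQ$, completing Conjecture \ref{conj:d=00003D6}.

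The main obstacle is that this reduction is \emph{conditional on Conjecture \ref{conj ggk}}, which is precisely the conjecture the paper is testing. An unconditional proof would require either a direct evaluation of $\tilde a_{10}$ by extending the Cauchy-residue computation of §5.5 to the $\ell(s)$ coefficient over all four region types (I, II, III, IV)—which demands evaluating multi-series with up to seven summation indices, such as the one displayed in §5.5, for which no hypergeometric closed form is currently available—or an alternative motivic/K-theoretic route analogous to the mirror-symmetric approach of §4 (cf.\ Remark \ref{rem 4.1}), which would compute $\Omega_{lim}$ by independent means. Even the specific rational values $q_0$ and $q_2$ promised above are not pinned down by the $G_2$-argument alone; extracting them would again require the full region-by-region integration, and would likely produce further $G_2$-identities generalizing Lemma \ref{lem:g2=00003D} (obtained by equating the resulting expressions with the rationality constraints on $\tilde a_{20}$ and $\tilde a_{00}$).
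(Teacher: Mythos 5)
The statement is an open conjecture which the paper does not prove; the authors justify its expected form exactly as you do --- the $\ell^{6}$ and $\ell^{4}$ coefficients from the region (I)+(IIa,b) computation, vanishing of the $\ell^{5}$ and $\ell^{3}$ terms via canonical normalization and Lemma \ref{lem:g2=00003D}, rationality of the remaining even-degree coefficients from the $G_{2}$-constrained shape of $\Omega_{lim}$ via Proposition \ref{prop ***}, and the $\zeta(5)/(2\pi i)^{5}$ form of the $\ell(s)$ coefficient as an instance of Conjecture \ref{conj ggk} for the SSD over $\QQ$. Your closing assessment --- that an unconditional proof would require either evaluating the intractable multi-index series arising from the remaining regions or an independent computation of $\Omega_{lim}$ --- is precisely why the authors leave this as a conjecture, so your proposal is a correct and faithful account of the state of the argument.
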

\appendix

\section{Some hypergeometric special values}

The vanishing cycle periods $\int_{\mu_{t}}\omega_{t}$ (for each
$d\geq1$) in $\S5.2$ are close cousins of the hypergeometric functions
${}_{d+1}F_{d}\left(\left.\begin{array}{c}
\frac{1}{2},\ldots,\frac{1}{2}\\
1,\ldots,1
\end{array}\right|t\right)=$
\[
\frac{2^{d-1}}{(2\pi i)^{d}}\int_{0\leq x_{d}\leq\cdots\leq x_{1}\leq1}\frac{dx_{1}\wedge\cdots\wedge dx_{d}}{\sqrt{\left(\prod_{i=1}^{d}x_{i}\right)(x_{1}-1)\left(\prod_{j=1}^{d-1}(x_{j+1}-x_{j})\right)(1-tx_{d})}},
\]
with the main discrepancy arising from the alternation between $x_{i}$
and $(1-x_{j})$ (rather than simply having $\prod x_{i}$) under
the radical in our setup. So it is not surprising that hypergeometric
special values such as \eqref{eqn gamma}-\eqref{eqn gammatilde}
appear in the coefficients of powers of $\log t$ in $\int_{\tau_{t}}\omega_{t}$. 

In this appendix, we shall explain how to derive expressions for these
constants in terms of Riemann zeta values. Writing (by abuse of notation)
$\zeta(1):=\log4$, we have the following for \eqref{eqn gamma}:\begin{equation}\label{e:g1eval}\gamma_1 = \zeta(1),\end{equation}\begin{equation}\gamma_2 = \zeta(2)-\frac{1}{2}\zeta(1)^2,\end{equation}\begin{equation}\label{e:g3eval}\gamma_3 = 2\zeta(3) - \zeta(2)\zeta(1) + \frac{1}{6}\zeta(1)^3,\end{equation}\begin{equation}\gamma_4=\frac{9}{4}\zeta(4) - 2\zeta(3)\zeta(1) + \frac{1}{2}\zeta(2)\zeta(1)^2 -\frac{1}{24}\zeta(1)^4,\end{equation}\begin{equation}\label{e:g5eval}\begin{matrix} \gamma_5=6\zeta(5)-\frac{9}{4}\zeta(4)\zeta(1)-2\zeta(3)\zeta(2)+\zeta(3)\zeta(1)^2-\frac{1}{6}\zeta(2)\zeta(1)^3 \\ +\frac{1}{120}\zeta(1)^5,\end{matrix}\end{equation}\begin{equation}\label{e:g6eval}\begin{matrix} \gamma_6 = \frac{79}{16}\zeta(6)-6\zeta(5)\zeta(1)+\frac{9}{8}\zeta(4)\zeta(1)^2-2\zeta(3)^2+2\zeta(3)\zeta(2)\zeta(1) \\ -\frac{1}{3}\zeta(3)\zeta(1)^3+\frac{1}{24}\zeta(2)\zeta(1)^4-\frac{1}{720}\zeta(1)^6.\end{matrix}\end{equation}We
also record some values of \eqref{eqn gammatilde}:%
\footnote{These suggest a delightful relation to the $\gamma_{n}$ which in
fact fails for $n\geq4$.%
} \begin{equation}\label{e:gteval}\tilde{\gamma}_0=1,\; \tilde{\gamma}_1=\zeta(1),\; \tilde{\gamma}_2=\zeta(2)+\frac{1}{2}\zeta(1)^2,\\ \tilde{\gamma}_3 = 2\zeta(3)+\zeta(2)\zeta(1)+\frac{1}{6}\zeta(1)^3.\end{equation}The
values $\gamma_{4},\gamma_{5},\gamma_{6},\tilde{\gamma}_{1},\tilde{\gamma}_{2},\tilde{\gamma}_{3}$
were computed using Mathematica \cite{MATH}, though the method used
below for $\gamma_{1},\gamma_{2},\gamma_{3}$ would also suffice.

We shall proceed by expressing the associated hypergeometric function
$$
     h_n(t) = \sum_{k=1}^{\infty}\, t^k 
              \begin{pmatrix} -1/2 \\ k \end{pmatrix} k^{-n}
            = (-t/2)\, {}_{n+2} F_{n+1} \left(\left.
             \begin{matrix} 1,  \cdots,  1,  \frac{3}{2}  \\
                      2,  \cdots,  2  \end{matrix}\right| -t\right) 
$$
in terms of polylogarithms for $n\leq 3$.  Since we are interested in 
$\gamma_n = h_n(-1)$, we introduce the auxiliary function
$f_n(u) = h_n(u^2-1)$. Manipulation of power series shows that
$$
     \frac{d h_n}{dt} = (1/t)h_{n-1}(t),\qquad h_n(0) = 0;
$$
and therefore
$$
     \frac{df_n}{du} = \frac{2u}{u^2 -1}f_{n-1}(u)
                     = \left(\frac{1}{u-1} + \frac{1}{u+1}\right)f_{n-1}(u).
$$
In particular, since $f_n(1) = h_n(0) = 0$, we have an iterated integral
formula
$$
     f_n(u) = \int_1^u\,  \frac{df_n}{dv}\,dv
            = \int_1^u\, \left(\frac{1}{v-1} + \frac{1}{v+1}\right)
                         f_{n-1}(v)\,dv
$$
which starts with
$$
     f_0(u) = h_0(u^2 - 1) = 1/u - 1
$$
by the binomial series.

\begin{lem} $f_1(u) = -2(\log(u+1)-\log(2))$.
\end{lem}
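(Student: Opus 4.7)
The plan is a direct computation from the iterated integral formula established just before the lemma. Substituting $f_0(v) = 1/v - 1$ into
\[
f_1(u) = \int_1^u \left(\frac{1}{v-1} + \frac{1}{v+1}\right) f_0(v)\, dv,
\]
the integrand becomes $\left(\frac{1}{v-1} + \frac{1}{v+1}\right)\cdot\frac{1-v}{v}$.

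The key algebraic observation is that this expression collapses nicely. The first piece $\frac{1-v}{v(v-1)}$ equals $-1/v$ directly, while the second piece $\frac{1-v}{v(v+1)}$ expands by partial fractions as $\frac{1}{v} - \frac{2}{v+1}$. Adding these, the $\pm 1/v$ terms cancel and the integrand reduces to $-\frac{2}{v+1}$.

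Therefore
\[
f_1(u) = -2\int_1^u \frac{dv}{v+1} = -2\bigl(\log(u+1) - \log 2\bigr),
\]
as claimed. There is no real obstacle here; the only point worth verifying carefully is the sign/bookkeeping in the partial-fraction step, which is what makes the cancellation produce the clean answer $-2/(v+1)$.
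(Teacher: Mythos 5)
Your proof is correct and follows essentially the same route as the paper: both substitute $f_0(v)=1/v-1$ into the iterated integral formula and simplify the integrand to $-2/(v+1)$ before integrating. The paper just combines the kernel as $\tfrac{2v}{v^2-1}$ and cancels directly rather than doing the term-by-term partial fractions, but the computation is the same.
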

\begin{proof} By the above 
$$
       f_1(u) 
         = \int_1^u\, \frac{2v}{v^2-1}\left(1/v - 1\right)\,dv = -2\int_1^u \frac{dv}{1+v},
$$
which gives the result.
\end{proof}

It follows that $ \gamma_1 (= f_1(0))= \log(4)$ as desired.

\par At this point, we observe that the differential of $f_1(u)$ is 
$-2/(u+1)$, so we are really calculating a series of iterated integrals
on $\mathbb P^1 \setminus \{1,-1,\infty\}$.  We therefore make the change of 
variables $2w = v+1$ to obtain
$$
          f_n(2u-1) = \int_1^u\, \left(\frac{1}{w-1} + \frac{1}{w}\right)
                                 f_{n-1}(2w-1)\,dw
$$
Integration either by hand or Mathematica gives

\begin{lem}
$$
\aligned
           f_2(2w-1) = &2\Li_2(1-w) - \log^2(w) \\
           f_3(2w-1) = &2\Li_3(1-w) - 2\Li_3(w) + 2\zeta(3) \\
                       &+2\zeta(2)\log(w) -\log(1-w)\log^2(w)
                        -(1/3)\log^3(w)
\endaligned
$$
\end{lem}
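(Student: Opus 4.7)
The plan is to iterate the integration formula
\[
f_n(2u-1) \;=\; \int_1^u \Bigl(\tfrac{1}{w-1} + \tfrac{1}{w}\Bigr) f_{n-1}(2w-1)\,dw
\]
starting from the previous lemma, which in the new variable reads $f_1(2w-1) = -2\log w$.

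For $f_2(2u-1)$, substituting $f_1(2w-1) = -2\log w$ yields the sum $-2\int_1^u \frac{\log w}{w-1}\,dw \;-\; 2\int_1^u \frac{\log w}{w}\,dw$. The second integral is $\log^2 u$ outright. For the first, the change of variable $z = 1-w$ converts it to $\int_0^{1-u}\frac{\log(1-z)}{z}\,dz = -\mathrm{Li}_2(1-u)$ by the defining formula for the dilogarithm. Assembling gives $f_2(2u-1) = 2\,\mathrm{Li}_2(1-u) - \log^2 u$, as claimed.

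For $f_3(2u-1)$, plugging in the $f_2$ formula produces four integrals. Two are easy: $\int_1^u \frac{\mathrm{Li}_2(1-w)}{w-1}\,dw = \mathrm{Li}_3(1-u)$ by the same $z=1-w$ substitution (and the defining identity $\mathrm{Li}_3' = \mathrm{Li}_2/x$), while $\int_1^u \frac{\log^2 w}{w}\,dw = \frac{1}{3}\log^3 u$. The integral $\int_1^u \frac{\mathrm{Li}_2(1-w)}{w}\,dw$ I handle by parts (with $A = \mathrm{Li}_2(1-w)$, using $\frac{d}{dw}\mathrm{Li}_2(1-w) = \frac{\log w}{1-w}$, and $dB = \tfrac{dw}{w}$): it equals $\mathrm{Li}_2(1-u)\log u + \int_1^u \frac{\log^2 w}{w-1}\,dw$. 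Finally, the stubborn $\int_1^u \frac{\log^2 w}{w-1}\,dw$ is obtained via the substitution $z = 1-w$ followed by the standard iterated-integration-by-parts calculation
\[
\int \frac{\log^2 y}{1-y}\,dy \;=\; -\log^2 y\,\log(1-y) \;-\; 2\log y\,\mathrm{Li}_2(y) \;+\; 2\,\mathrm{Li}_3(y) \,+\, C,
\]
whose boundary evaluation at $y=1$ contributes $2\zeta(3)$. The result is
\[
\int_1^u \tfrac{\log^2 w}{w-1}\,dw \;=\; 2\zeta(3) - 2\,\mathrm{Li}_3(u) + 2\log u\,\mathrm{Li}_2(u) + \log^2 u\,\log(1-u).
\]

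Combining the four pieces gives an expression containing the stray terms $2\log u[\mathrm{Li}_2(1-u)+\mathrm{Li}_2(u)] + 2\log^2 u\,\log(1-u)$. The final step is to invoke Euler's reflection formula $\mathrm{Li}_2(u) + \mathrm{Li}_2(1-u) = \zeta(2) - \log u\,\log(1-u)$, which collapses these to $2\zeta(2)\log u - \log(1-u)\log^2 u$ and yields the stated formula. The main obstacle is the evaluation of $\int \log^2 w/(w-1)\,dw$ in closed form; choosing the correct integration-by-parts split (so that one lands on $\int\mathrm{Li}_2(y)/y\,dy = \mathrm{Li}_3(y)$ rather than on circular identities) is the only delicate step, and the final matching relies crucially on Euler's reflection formula.
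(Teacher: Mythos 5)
Your proof is correct and follows essentially the same route as the paper: iterate the recursion for $f_n$, evaluate the resulting integrals via the antiderivative $\int \log^2 w/(w-1)\,dw = \log(1-w)\log^2 w + 2\log w\,\mathrm{Li}_2(w) - 2\mathrm{Li}_3(w)$ (verified by differentiation) together with the defining integrals of $\mathrm{Li}_2$ and $\mathrm{Li}_3$, and clean up with Euler's reflection formula. One bookkeeping typo: the leftover multiple of $\log^2 u\,\log(1-u)$ before applying reflection should be $1$, not $2$, since the $+2\int_1^u \log^2 w/(w-1)\,dw$ produced by your integration by parts is partly cancelled by the $-\int_1^u \log^2 w/(w-1)\,dw$ term already present --- this is exactly what makes the $-\log(1-u)\log^2 u$ in the stated formula come out correctly.
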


\par To integrate $f_1(u)$ by hand to obtain $f_2(u)$, we observe that
$$
    f_1(u) = 2\sum_{\ell=1}^{\infty} \frac{(u-1)^{\ell} (-1)^{\ell}}{2^{\ell} \ell}
$$
at $u=1$.  This allows us to compute the integral of $f_1(u)/(u-1)$ in
terms of $\Li_2$.  The integral of $f_1(u)/(u+1)$ is elementary.
To determine $f_3(u)$ we must integrate both $f_2(v)/(v-1)$ and 
$f_2(v)/(v+1)$.  To this end, we use the following identity which allows
us to interchange $\Li_2(w)$ and $\Li_2(1-w)$:

\begin{lem}
$$
       \Li_2(w) + \Li_2(1-w) = \zeta(2) -\log(w)\log(1-w)
$$
and hence $2\Li_2(1/2) = \zeta(2) - \frac{1}{4}\zeta^2(1)$. 
\end{lem}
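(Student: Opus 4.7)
The plan is to use the classical differentiation trick. Consider the function
\[
F(w) := \Li_2(w) + \Li_2(1-w) + \log(w)\log(1-w)
\]
on (say) the real interval $w \in (0,1)$, and show it is identically constant there; the complex identity then follows by analytic continuation to any simply connected domain avoiding $\{0,1\}$.

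Using the defining identity $\Li_2'(z) = -\log(1-z)/z$ (from term-by-term differentiation of the series $\sum z^k/k^2$), one computes
\[
F'(w) = -\frac{\log(1-w)}{w} + \frac{\log(w)}{1-w} + \frac{\log(1-w)}{w} - \frac{\log(w)}{1-w} = 0,
\]
so $F$ is constant on $(0,1)$. To pin down the constant, take the limit $w \to 1^-$: one has $\Li_2(1) = \zeta(2)$, $\Li_2(0) = 0$, and $\log(w)\log(1-w) \sim (w-1)\log(1-w) \to 0$. Hence $F \equiv \zeta(2)$, giving the reflection formula.

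For the stated consequence, substitute $w = \tfrac{1}{2}$:
\[
2\Li_2(\tfrac{1}{2}) = \zeta(2) - \log^2(\tfrac{1}{2}) = \zeta(2) - \log^2 2.
\]
With the paper's convention $\zeta(1) := \log 4 = 2\log 2$, one has $\log^2 2 = \tfrac{1}{4}\zeta(1)^2$, yielding $2\Li_2(\tfrac{1}{2}) = \zeta(2) - \tfrac{1}{4}\zeta(1)^2$ as claimed.

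There is no real obstacle here; the only care required is in the boundary limit and in choosing consistent branches of $\log$ on a simply connected subdomain of $\CC \setminus \{0,1\}$ so that the derivative computation (which is purely formal) is genuinely valid. Since both sides are single-valued real-analytic on $(0,1)$ and the equality of holomorphic functions extends from any interval of agreement, this suffices to establish the stated identity.
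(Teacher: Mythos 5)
Your proof is correct and follows essentially the same route as the paper: differentiate using $\Li_2'(z)=-\log(1-z)/z$ to see the combination is constant, and evaluate the constant $\zeta(2)$ by letting $w\to1$. The added care with the boundary limit and the explicit $w=\tfrac12$ substitution are fine and match the intended argument.
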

\begin{proof} Differentiate both sides with help of the formulas:
$$
        \frac{d}{dw}\Li_2(w) = -\frac{\log(1-w)}{w},\qquad
        \frac{d}{dw}\Li_2(1-w) = \frac{\log(w)}{1-w}
$$
The constant of integration $\zeta(2)$ is obtained by taking the limit at 
$w\to 1$.
\end{proof}

\par The two remaining integrals needed to calculate $f_3$ are given by:

\begin{lem} 
$$
\aligned
\int\, &\frac{\log^2(w)}{w-1} 
        = \log(1 - w)\log^2(w) + 2\log(w)\Li_2(w)- 2\Li_3(w) \\
\int\, &\frac{\log(w)\log(1-w)}{w}\,dw = \Li_3(w)-\Li_2(w)\log(w)
\endaligned
$$      
\end{lem}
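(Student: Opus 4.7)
The plan is simply to verify both identities by differentiating the right-hand sides and matching them to the integrands; since both sides vanish at an appropriate base point (or agree up to a constant that can be absorbed into the indefinite integral), equality of derivatives suffices. The only tools needed are the standard polylogarithm derivative rules
\[
\frac{d}{dw}\Li_2(w) = -\frac{\log(1-w)}{w},\qquad \frac{d}{dw}\Li_3(w) = \frac{\Li_2(w)}{w},
\]
together with the product and chain rules.

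For the second identity I would start, since it is shorter. Differentiating $\Li_3(w)-\Li_2(w)\log(w)$ gives
\[
\frac{\Li_2(w)}{w} \;-\; \left(-\frac{\log(1-w)}{w}\right)\log(w) \;-\; \frac{\Li_2(w)}{w} \;=\; \frac{\log(w)\log(1-w)}{w},
\]
so the two $\Li_2(w)/w$ contributions cancel and what remains is exactly the integrand.

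For the first identity I would differentiate the three-term right-hand side $\log(1-w)\log^2(w) + 2\log(w)\Li_2(w) - 2\Li_3(w)$ piecewise. The first summand contributes $-\log^2(w)/(1-w) + 2\log(1-w)\log(w)/w$; the second contributes $2\Li_2(w)/w - 2\log(w)\log(1-w)/w$ upon applying the product rule and the $\Li_2$-derivative; and the third contributes $-2\Li_2(w)/w$. I expect the $\Li_2(w)/w$ terms and the $\log(w)\log(1-w)/w$ terms to cancel pairwise, leaving $-\log^2(w)/(1-w) = \log^2(w)/(w-1)$, which is the required integrand.

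No step here is genuinely hard: the only thing to watch is bookkeeping of signs in the product-rule expansions and the sign in $\frac{d}{dw}\Li_2(w) = -\log(1-w)/w$. Since both identities are stated as indefinite integrals, no boundary evaluation is required; any constant of integration can be absorbed. Thus the proof reduces to two short symbolic differentiations, each of which can also be cross-checked with Mathematica exactly as the authors did for $\gamma_4,\gamma_5,\gamma_6$ in the preceding computations.
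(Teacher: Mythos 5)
Your proposal is correct and is exactly the paper's argument: the authors also verify both identities by differentiating the right-hand sides using $\frac{d}{dw}\Li_3(w)=\Li_2(w)/w$ and $\frac{d}{dw}\Li_2(w)=-\log(1-w)/w$. The cancellations you anticipate do occur as described, so nothing further is needed.
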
  
\begin{proof} Differentiate both sides of each equation using
$$
         \frac{d}{dw}\Li_3(w) = \frac{\Li_2(w)}{w},\qquad
         \frac{d}{dw}\Li_2(w) = -\frac{\log(1-w)}{w}.
$$
\end{proof}

\begin{rem} Based on empirical evidence gathered using Mathematica, 
it appears that
$$
         \gamma_d = \sum_P\, c_P\zeta(p_1)\cdots\zeta(p_r)
$$
where the sum runs over all partitions $P$ of $d$ into a sum of positive
integers, and the coefficients $c_P$ are determined as follows:
\begin{itemize}
\item The coefficient of $\zeta(d)$ in $\gamma_d$ is 
$$
            c_d = \frac{2^d - 2}{d},\qquad d>1.
$$
\item The coefficient of $\zeta(1)^d$ in $\gamma_d$ is $\frac{(-1)^{d-1}}{d!}$.
\item If $P = (p_1,\dots,p_r)$ is a partition of $d$ with all $p_j>1$ 
then
$$
        c_P = (-1)^{r-1} c_{p_1}\cdots c_{p_r}
$$
assuming all $p_j$'s are distinct. More generally, if $P$ contains
elements with multiplicity $m_1,\dots,m_k>1$ then
$$
        c_P = \frac{(-1)^{r-1}}{m_1!\cdots m_k!} c_{p_1}\cdots c_{p_r}
$$
For example $\zeta(a)\zeta(b)^2$ appears with coefficient 
$\frac{1}{2}c_ac_b^2$ while $\zeta(a)^3$ appears with coefficient 
$\frac{1}{6}c_a^3$.
\item The coefficient of $\zeta(p_1)\cdots\zeta(p_r)\zeta(1)^c$ is equal
to $\frac{(-1)^c}{c!}$ times the coefficient of 
$\zeta(p_1)\cdots\zeta(p_r)$.  This assumes all $p_j>1$.
\end{itemize}
For example, using $\gamma_2$, $\gamma_3$ we compute that 
$$
    \gamma_4 = \frac{7}{2}\zeta(4) - 2\zeta(3)\zeta(1) - \frac{1}{2}\zeta(2)^2
                  + \frac{1}{2}\zeta(2)\zeta(1)^2 - \frac{1}{24}\zeta(1)^4
$$
where $\frac{7}{2}\zeta(4)-\frac{1}{2}\zeta(2)^2 = \frac{9}{4}\zeta(4)$.  
Likewise, to calculate the coefficient of $\pi^6$ in $\gamma_6$ we consider
$$
      \frac{31}{3}\zeta(6) - \frac{7}{2}\zeta(4)\zeta(2) 
       + \frac{1}{6}\zeta(2)^3 = \frac{79}{16}\zeta(6)
$$
\end{rem}

\curraddr{\noun{${}$}\\
\noun{Department of Mathematics, Campus Box 1146}\\
\noun{Washington University in St. Louis}\\
\noun{St. Louis, MO} \noun{63130, USA}}

\email{\emph{${}$}\\
\emph{e-mail}: gffsjr@math.wustl.edu}

\curraddr{\noun{${}$}\\
\noun{Department of Mathematics, Campus Box 1146}\\
\noun{Washington University in St. Louis}\\
\noun{St. Louis, MO} \noun{63130, USA}}

\email{\emph{${}$}\\
\emph{e-mail}: matkerr@math.wustl.edu}

\curraddr{${}$\\
\noun{Mathematics Department, Mail stop 3368}\\
\noun{Texas A\&M University}\\
\noun{College Station, TX 77843, USA}}

\email{${}$\\
\emph{e-mail}: gpearl@math.msu.edu}

\begin{thebibliography}{CDLNT}
\bibitem[An]{An}Y. Andr\'e, \emph{Mumford-Tate groups of mixed Hodge
structures and the theorem of the fixed part}, Compositio Math. 82
(1992), no. 1, 1-24.

\bibitem[CDLNT]{CDLNT}A. Clingher, C. Doran, J. Lewis, A. Novoseltsev,
and A. Thompson, \emph{The 14th case VHS via $K3$ fibrations}, arXiv:1312.6433.

\bibitem[CK]{CK}D. Cox and S. Katz, \textquotedblleft{}Mirror symmetry
and algebraic geometry\textquotedblright{}, Math. Surveys and Monographs
68, AMS, Providence, RI, 1999.

\bibitem[De]{De}P. Deligne, \emph{Hodge cycles on abelian varieties
(Notes by J. S. Milne)}, in \textquotedblleft{}Hodge cycles, Motives,
and Shimura varieties\textquotedblright{}, Lect. Notes in Math. 900,
Springer-Verlag, New York, 1982, 9-100.

\bibitem[D]{D}M. Dettweiler, \emph{On the middle convolution of local
systems}, preprint, 2008, arXiv:0810.3334.

\bibitem[DR1]{DR}M. Dettweiler and S. Reiter, \emph{Rigid local systems
and motives of type }$G_{2}$, Compositio Math. 146 (2010), 929-963.

\bibitem[DR2]{DR2}M. Dettweiler and S. Reiter, \emph{On exceptional rigid 
local systems}, http://arxiv.org/pdf/math/0609142v1.pdf

\bibitem[DS]{DS}M. Dettweiler and C. Sabbah, \emph{Hodge theory of
the middle convolution}, preprint, 2013, arXiv:1209.4185.

\bibitem[DK]{DK}C. Doran and M. Kerr, \emph{Algebraic cycles and
local quantum cohomology}, preprint, Aug. 2013, arXiv:1307.5902.

\bibitem[DM]{DM}C. Doran and J. Morgan, \emph{Mirror symmetry and
integral variations of Hodge structure underlying one-parameter families
of Calabi-Yau threefolds}, in \textquotedblleft{}Mirror Symmetry V:
Proceedings of the BIRS Workshop on CY Varieties and Mirror Symmetry
(Dec. 2003)\textquotedblright{}, AMS/IP Stud. Adv. Math. 38 (2006),
517-537.

\bibitem[GGK]{GGK}M. Green, P. Griffiths and M. Kerr, \emph{Neron
models and boundary components for degenerations of Hodge structures
of mirror quintic type}, in \textquotedbl{}Curves and Abelian Varieties
(V. Alexeev, Ed.)\textquotedbl{}, Contemp. Math 465 (2007), AMS, 71-145. 

\bibitem[GGK2]{GGK2}---------, ``Mumford-Tate groups and domains:
their geometry and arithmetic'', Annals of Mathematics Studies 183,
Princeton University Press, Princeton, NJ, 2012.

\bibitem[GL]{GL}B. Greene and C. Lazaroiu, \emph{Collapsing D-branes
in Calabi-Yau moduli space, I}, Nucl. Phys. B 604 (2001), no. 1\textendash{}2,
181\textendash{}255.

\bibitem[Ir]{Ir}H. Iritani, \emph{An integral structure in quantum
cohomology and mirror symmetry for toric orbifolds}, Adv. Math. 222
(2009), no. 3, 1016\textendash{}1079.

\bibitem[Ka]{Ka}N. Katz, \textquotedblleft{}Rigid local systems\textquotedblright{},
Princeton Univ. Press, Princeton, 1995.

\bibitem[Ke]{Ke}R. Keast, \emph{CM Galois groups and the infinitesimal
period relation}, arXiv:1408.5157.

\bibitem[KP]{KP}M. Kerr and G. Pearlstein, \emph{Boundary Components
of Mumford-Tate domains}, preprint, 2013, arXiv:1210.5301

\bibitem[Le]{Le}M. Levine, \emph{Motivic tubular neighborhoods},
Doc. Math. 12 (2007), 71-146.

\bibitem[MATH]{MATH}Wolfram Research, Inc., Mathematica, Version
6.0, Champaign, IL (2007). 

\bibitem[Mo]{Mo}D. Morrison, \emph{Mathematical Aspects of Mirror
Symmetry}, in \textquotedblleft{}Complex Algebraic Geometry (J. Kollar,
ed.)\textquotedblright{}, IAS/Park City Math. Series, vol. 3, 1997,
265-340

\bibitem[Pa]{Pa}S. Patrikis, \emph{Generalized Kuga-Satake theory
and rigid local systems I: the middle convolution}, arXiv:1407.1942.

\bibitem[Ro]{Ro}C. Robles, \emph{Principal Hodge representations},
in ``Hodge Theory, Complex Geometry, and Representation Theory''
, Contemporary Mathematics 608 (2014), AMS.

\bibitem[Sc]{Sc}W. Schmid, \emph{Variation of Hodge structure: the
singularities of the period mapping}, Invent. Math. 22 (1973), 211-319.

% \bibitem[Si]{Si}C. Simpson, \emph{Higgs bundles and local systems},
% Publ. Math. de l'IH\'ES 75 (1992), 5-95.

\bibitem[Us]{Us}S. Usui, \emph{Studies of closed and open mirror
symmetry for quintic threefolds through log mixed Hodge theory}, arXiv:1404.7687.

\end{thebibliography}
\end{document}